\pgfplotsset{compat=1.15}
\newcommand{\R}{\mathds{R}}
\newcommand{\N}{\mathds{N}}
\newcommand{\st}{\textrm{s.t.\xspace}}
\newcommand{\ie}{i.e.,\xspace}
\DeclareMathOperator{\TV}{TV}
\DeclareMathOperator{\LP}{LP}
\DeclareMathOperator{\crit}{crit}
\DeclareMathOperator{\low}{low}
\DeclareMathOperator{\diag}{diag}
\DeclareMathOperator{\argmin}{arg\,min}
\renewcommand{\epsilon}{\varepsilon}
\theoremstyle{plain}
\newtheorem{thm}{Theorem}[section]
\newtheorem{lem}[thm]{Lemma}
\newtheorem{prop}[thm]{Proposition}
\theoremstyle{definition}
\newtheorem{rem}[thm]{Remark}
\newtheorem{ass}{Assumption}
\crefname{ass}{Assumption}{Assumptions}
\crefname{lem}{Lemma}{Lemmas}
\crefname{cor}{Corollary}{Corollaries}
\newcommand{\define}{\coloneqq}
\newcommand{\enifed}{\eqqcolon}
\newcommand{\trustRadius}{\Delta}
\newcommand{\trustRadiusLP}{\Delta^{\LP}}
\newcommand{\trans}[1]{{#1}^{\text{T}}}
\newcommand{\trustRadiusLPMax}{\trustRadiusLP_{\max}}
\newcommand{\ratioParam}{\vartheta}
\newcommand{\ratioParamOpt}{\ratioParam^{\ast}_{\epsilon}}
\newcommand{\noisy}[1]{\tilde{#1}}
\newcommand{\normLP}[1]{\ensuremath{\|#1\|_{\LP}}}
\newcommand{\normFrob}[1]{\ensuremath{\|#1\|_{F}}}
\newcommand{\onehalf}{\tfrac{1}{2}}
\newcommand{\Lip}{L}
\newcommand{\LipOmega}{\Lip^{\omega}}
\newcommand{\LipF}{\Lip^{F}}
\newcommand{\LipFF}{\Lip^{F'}}
\newcommand{\LipEll}{\Lip^{\ell}_{\epsilon}}
\newcommand{\FactConst}{M_{0}^{\epsilon}}
\newcommand{\FactLin}{M_{1}^{\epsilon}}
\newcommand{\FactQuad}{M_{2}^{\epsilon}}
\newcommand{\imageNoise}{\varepsilon_{\text{img}}}
\newcommand{\email}[1]{\href{mailto:#1}{\nolinkurl{#1}}}
\newcommand{\thetitle}{Convergence of Successive Linear Programming Algorithms for Noisy Functions}
\newcommand{\proofAppendix}{\begin{proof}The proof is in the appendix.\end{proof}}
\newcommand{\theauthone}{Christoph Hansknecht}
\newcommand{\theauthtwo}{Christian Kirches}
\newcommand{\theauththree}{Paul Manns}
\tikzset{
  CoordNode/.style={
    draw,fill,circle,inner sep=0.5
  }
}
\colorlet{ClassicalColor}{blue}
\colorlet{StabilizedColor}{red}
\colorlet{NoisyColor}{blue}
\colorlet{NoiselessColor}{red}
\pgfplotsset{
  Classical/.style={
    mark=none,
  },
  Stabilized/.style={
    mark=none,dashed
  },
  Iterations/.style={
    xmin=0,
    xmax=50,
    xtick={0, 10, ..., 50}
  }
}
\newcommand{\largeNum}[1]{$\num[round-mode=places,round-precision=0,scientific-notation=true]{#1}$}
\begin{document}

\title{\thetitle}

\author[1]{\theauthone}
\affil[1]{\small Institute for Mathematical Optimization, TU~Braunschweig,\protect\\
  \texttt{\{\href{mailto:c.hansknecht@tu-braunschweig.de}{c.hansknecht},\href{mailto:c.kirches@tu-braunschweig.de}{c.kirches}\}@tu-braunschweig.de}}

\author[1]{\theauthtwo}

\author[2]{\theauththree}
\affil[2]{\small Chair of Numerical Analysis and Optimization, TU~Dortmund~University\protect\\
  \email{paul.manns@tu-dortmund.de}}

\date{\today}

\maketitle

\begin{abstract}
  Gradient-based methods have been highly successful for solving a variety
of both unconstrained and constrained nonlinear optimization problems.
In real-world applications, such as optimal control or machine 
learning, the necessary function and derivative information may
be corrupted by noise, however. Sun and Nocedal have recently
proposed a remedy for smooth unconstrained problems by means
of a stabilization of the acceptance criterion for computed iterates,
which leads to convergence of the iterates of a
trust-region method to a region of criticality \cite{noisy_trust_region}.

We extend their analysis to the successive linear programming
algorithm \cite{sleqp,sleqp_convergence} for unconstrained optimization problems
with objectives that can be characterized as the composition of a polyhedral function
with a smooth function, where the latter and its gradient
may be corrupted by noise. This gives the flexibility to cover,
for example, (sub)problems arising image reconstruction
or constrained optimization algorithms.

We provide computational examples that illustrate the findings
and point to possible strategies for practical determination
of the stabilization parameter that balances the size of
the critical region with a relaxation
of the acceptance criterion (or
descent property) of the algorithm.
\end{abstract}

\section{Introduction}

Handling non-smoothness is an ubiquitous research
question in nonlinear optimization because it
arises naturally in different areas, for example,
penalty functions for constrained optimization
\cite{nlp_book}, statistical data analysis
and signal processing 
\cite{tibshirani1996regression,candes2006stable},
and neural network architectures
\cite{fukushima1975cognitron}.
In this work we study the convergence properties of successive linear programming algorithms
to solve the optimization problem
\begin{equation}
  \label{eq:convex_problem}
  \tag{P}
  \min_{x \in \R^n} \: \phi(x) \define \omega(F(x)),
\end{equation}
where $\omega : \R^{p} \to \R$ is convex and Lipschitz 
continuous with polyhedral epigraph, and
$F : \R^{n} \to \R^{p}$ is twice continuously differentiable. Moreover, we assume that $F$
and its gradient can only be accessed inexactly
so that their evaluations are corrupted by noise.
This and similar problems have been studied in the literature, see, for example, \cite{aspelmeier2016local,cartis2011evaluation,apkarian2016nonsmooth,boyd2011distributed,nesterov2013gradient}
and the references therein.

Many optimization problems can be 
formulated in terms of problem~\eqref{eq:convex_problem}
such as the Lagrangian form 
\[ \min_{x \in \R^n} \|y - Ax\|^2_2 + \beta \|x\|_1 \]
of the 
famous LASSO problem \cite{tibshirani1996regression,santosa1986linear}
with $A \in \R^{m\times n}$, $\beta > 0$,
and $y \in \R^m$ that is particularly popular among data scientists for
sparse parameter identification in over-parameterized
models. It is an instance of a 
subproblem for the exact penalty method
for general nonlinear constrained optimization problems that read
\begin{equation}
  \label{eq:nonlinear_problem}
  \tag{NLP}
  \min_{x \in \R^n} \: f(x) \:\: \st \:\: g(x) \leq 0, \: h(x) = 0
\end{equation}
with an objective $f : \R^n \to \R$ and constraints $g : \R^n \to
\R^{m}$, $h : \R^n \to \R^{k}$. 

The problem \eqref{eq:nonlinear_problem}
may be solved by minimizing a non-smooth
exact penalty function of the form
\begin{equation}
  \label{eq:penalty_func}
  \phi(x, \nu) \define f(x) + \nu \left\|\trans{\left( \trans{g(x)}_{+}, \trans{h(x)} \right)}\right\|_{1},
\end{equation}
where $y_{+} \define \max(y, 0)$.  In fact, strict local solutions
of~\eqref{eq:nonlinear_problem} are local minimizers of $\phi(x, \nu)$
for a sufficiently large value of $\nu$ if $g$ and $h$
are smooth and satisfy the
Mangasarian--Fromovitz constraint
qualification \cite[Theorem~4.4]{exact_penalty}, \cite[Theorem~17.3]{nlp_book} at the
respective points. In this case, the penalty function $\phi$ can be
expressed as $\omega(F(x))$, where
$F(x) \define \trans{(f(x), \trans{g(x)}, \trans{h(x)})}$ is
smooth and
$\omega(x, y, z) \define x + \nu \|\trans{(\trans{y_{+}}, \trans{z})}\|_{1}$
is convex and polyhedral. Besides problems of type~\eqref{eq:nonlinear_problem}, a
variety of other problems, such as linear or nonlinear fitting
problems or unconstrained smooth optimization problems can be
formulated in terms of~\eqref{eq:convex_problem} as well.

\paragraph{Noisy functions}
The combination of unconstrained optimization with
noisy observations has recently been examined~\cite{a_noise_tolerant,analysis_of_the_bfgs_method_with_errors,noisy_trust_region}.
The authors consider the minimization of a smooth function $\phi : \R^{n} \to \R$ while only having access to
\[
    f(x) = \phi(x) + \epsilon(x) \text{ and }
    g(x) = \nabla{\phi(x)} + e(x),
\]
where the only assumptions on $\varepsilon$
and $\epsilon$ are that both $|\epsilon|$
and $\|e\|$ are uniformly bounded. Consequently, it is not 
generally possible to generate a sequence $\{x_k\}$ of iterates converging to a local optimum or stationary point
of $\phi$. Intuitively, while the gradient noise $e$ is small compared
to $\nabla{\phi}$, the direction $g$ is a suitable search direction
with respect to $\phi$. This allows for the use of an Armijo-like globalization
strategy or, in case of \cite{noisy_trust_region}, a trust-region method, where the noise is handled by
stabilizing the reduction ratio, which
is of course closely related to the Armijo
condition.
As soon as a region is reached where the
noise produced by $\epsilon$ and $e$ becomes too large relative to
$\phi$ and $\nabla \phi$ respectively, no further progress can be expected
and the algorithm may stall. However, this critical region is visited
infinitely often and once reaching it,
the algorithm does not produce objective 
values much larger than the objective 
values attained in the critical region.
The authors also study the problem of 
adapting quasi-Newton methods to the noisy setting.

\paragraph{Contribution} We build on the ideas in 
\cite{a_noise_tolerant,analysis_of_the_bfgs_method_with_errors,noisy_trust_region}
and consider the non-smooth problem~\eqref{eq:convex_problem} in a
setting, where function and derivative evaluations are only available
as \emph{noisy} observations. As the authors in 
\cite{a_noise_tolerant,analysis_of_the_bfgs_method_with_errors,noisy_trust_region},
we assume the following noise model:
Rather than being able to evaluate $F$ and its derivative $F'$
directly, we only have access to
\begin{equation*}
    \noisy{F}(x) \define F(x) + \delta_F(x) \text{ and }
    \noisy{F'}(x) \define F'(x) + \delta_{F'}(x).
\end{equation*}
These proxies consist of the original functions $F$ and $F'$ as well
as error functions $\delta_{F} : \R^{n} \to \R^{p}$ and
$\delta_{F} : \R^{n} \to \R^{p \times n}$. We assume that the noise is
uniformly bounded via $\|\delta_F(x)\| \leq \epsilon_F$ and
$\|\delta_{F'}(x)\| \leq \epsilon_{F'}$ for all $x \in \R^n$, where $\|\cdot\|$
is the 2-norm in $\R^{n}$ induced by the standard scalar
product $\langle \cdot, \cdot \rangle$. We refer to
$\epsilon_{F}$ and $\epsilon_{F'}$ as the \emph{noise levels}
of $\noisy{F}$ and $\noisy{F'}$ respectively.
In terms of the problem~\eqref{eq:nonlinear_problem}, this is
tantamount to noise in the objective $f$, the constraints $g, h$, and
their respective derivatives.  Contrary to this, we assume that the
function $\omega$ does not suffer from any noise. What is more, we
presume that the structure of $\omega$ is well understood in the sense
that, for example, its Lipschitz constant is known, which is certainly
the case for the penalty function in~\eqref{eq:penalty_func}.

In order to solve optimization problems
of the form \eqref{eq:convex_problem}, we propose a
trust-region algorithm leaning on the successive
linear programming template proposed in
\cite{sleqp} and a convergence analysis that builds on the
ideas in
\cite{sleqp_convergence,a_noise_tolerant,analysis_of_the_bfgs_method_with_errors}.
Specifically, we use a stabilization of the iterate acceptance test in order
to assert that a neighborhood of a stationary point is visited infinitely often
by the iterates produced by the algorithm.
The polyhedral structure of $\omega$ is handled by first
solving a linear program in order to determine a direction
for a subsequent Cauchy point determination. This can
also be interpreted as an active set determination
for the corresponding kinks of the
polyhedral epigraph of $\omega$.

We also provide computational examples that illustrate
the theoretical results and the practical behavior
of the algorithm. Moreover, the results point to
open questions and possible approaches regarding
the choice of the correct stabilization parameter in
the acceptance test.

\paragraph{Structure of the Remainder}
We introduce the successive linear programming algorithm and 
the modified acceptance test in
\cref{sec:algorithm}. The asymptotics of the algorithm are analyzed in
\cref{sec:convergence}. We provide computational examples and the corresponding
results in \cref{sec:numerics}. We draw a conclusion in \cref{sec:conclusion}.

\section{A Noise-tolerant Successive Linear Programming Algorithm}\label{sec:algorithm}

In the noisy setting, we cannot expect to find the true 
optimum or stationary points of $\phi$, since we do not have 
access to $F$ and $\nabla F$. Specifically, in a small 
region around the true optimum $x^{*}$, $\noisy{F}$
and $\noisy{F'}$ may oscillate by amounts of
$\pm \varepsilon_{F}$ and $\pm \varepsilon_{F'}$,
thereby making its evaluations unreliable. This impairs 
globalization strategies in nonlinear programming because
their acceptance tests require reliable evaluations of
$F$ and a model function involving $\nabla F$.

In the non-noisy regime, a trust-region method produces
a sequence $\{x_k\}$ of iterates by assembling and
subsequently optimizing model functions $q_k : \R^n \to \R$, 
yielding a step $d$. The quality of $d$ is
determined according to the reduction ratio
\begin{equation*}
  \label{eq:normal_reduction_ratio}
  \rho_k \define \frac{\phi(x_k) - \phi(x_k + d_k)}{\phi(x_k) - q_k(d_k)},
\end{equation*}
which is used to determine whether or not the step will be
accepted.  However, in the noisy setting, we only have access to
$\noisy{F}$ leading to a composite function $\noisy{\phi}$ defined as
$\noisy{\phi}(x) \define \omega(\noisy{F}(x))$.
While we can build a model
$\noisy{q}_k : \R^{n} \to \R^{p}$ which coincides with $\noisy{F}$ at
$x_k$, we cannot control the numerator
$\noisy{\phi}(x_k) - \noisy{\phi}(x_k + d_k)$. Indeed, if we reduce
the trust region, sending $d_k$ to zero, the denominator of $\rho_k$
will tend to zero while the numerator will oscillate by up to
$\pm 2 \epsilon_F$, making the ratio unreliable. To alleviate this
problem, we turn towards a recent adaptation \cite{noisy_trust_region}
of trust-region methods to solving the noisy counterpart of
smooth, unconstrained problems like~\eqref{eq:convex_problem}.
The authors of \cite{noisy_trust_region} add a correction term,
that is a positive constant $\ratioParam > 0$, to both the numerator
and denominator of the reduction ratio $\rho_k$ to mitigate the effect
of noisy evaluations, yielding a modified ratio
\begin{equation*}
  \label{eq:noisy_reduction_ratio}
  \noisy{\rho}_k \define
  \frac{\noisy{\phi}(x_k) - \noisy{\phi}(x_k + d_k) + \ratioParam }
       {\noisy{\phi}(x_k) - \noisy{q}_k(d_k) + \ratioParam}.
\end{equation*}
The parameter $\ratioParam$ can then be chosen according to the noise
levels $\epsilon_{F}$ and $\epsilon_{F'}$ in order to stabilize the
ratio. As we will see, this means that for $\ratioParam$ large enough,
the iterates
of the successive linear programming algorithm converge to
a critical region around stationary point. The downside is that this region
grows with $\ratioParam$ and the algorithm also accepts steps that do not
improve the objective.

Apart from this adjustment, we follow the algorithmic approach
in~\cite{sleqp_convergence}. Specifically, we use the following
\emph{partially linearized} and \emph{quadratic} models at $x_k \in \R^n$
\begin{equation*}
  \begin{aligned}
    \noisy{\ell}_k(d) &\define \omega \left( \noisy{F}(x_k) + \noisy{F}'(x_k) d  \right) \mathrm{ and}\\
    \noisy{q}_k(d) &\define \noisy{\ell}_k(d) + \onehalf \langle d, B_k d \rangle,
  \end{aligned}
\end{equation*}
where the $B_k \in \R^{n \times n}$ are symmetric (not necessarily
positive definite) approximations of the curvature of $\omega \circ F$.

\paragraph{Algorithm}
Based on the models above, our noise-tolerant approach to
solving~\eqref{eq:convex_problem} is laid out in
\cref{alg:noisy_sleqp}. In each iteration, an initial step
$d^{\LP}$ is computed in~\cref{line:lp_step} by solving the problem
\begin{equation*}
  \min_{\normLP{d} \leq \trustRadiusLP_k} \noisy{\ell}_k(d),
\end{equation*}
where $\normLP{\cdot}$ is some norm on $\R^{n}$. We highlight that,
from an algorithmic point of view, it is advantageous to cast this
problem as a linear program, which can be solved using
state-of-the-art LP solvers~\cite{gurobi,HiGHS}. To this end, both the epigraph of $\omega$
and the feasible region given in terms of $\normLP{\cdot}$ should be
polyhedral as is the case, for example, for the $\ell^1$- and $\ell^\infty$-norms.

Due to the equivalence of norms in $\R^n$ there exists a
constant $\gamma > 0$ such that for each $d \in \R^{n}$ it holds that
\begin{equation}
  \label{eq:lp_norm_equiv}
  \|d\| \leq \gamma \normLP{d}.
\end{equation}
The algorithm proceeds to compute a Cauchy step $d^{C}_k$ in
\crefrange{line:cauchy_step_start}{line:cauchy_step_end}. To this end, it employs a
line search initialized with a step size sufficiently small to
ensure that the Cauchy step falls into the trust region bounded by
$\trustRadius_k$. During the line search the step size is shortened
by a factor of $0 < \tau < 1$ until the quadratic reduction achieved 
by the Cauchy point is within a factor of $0 < \eta < 1$ of its
linear reduction.

The actual step $d_k$ to be taken in~\cref{line:actual_step} can be
different from the Cauchy step $d^{C}_k$, provided that it improves
upon the quadratic reduction of $d^{C}_k$. This gives some algorithmic
flexibility, allowing for the computation of Newton-type steps in
order to achieve local quadratic convergence. Based on the stabilized
reduction ratio $\noisy{\rho}_k$ computed
in~\cref{line:reduction_ratio}, the step is either accepted
(\crefrange{line:accept_begin}{line:accept_end}) or rejected
(\crefrange{line:reject_begin}{line:reject_end}) according to an
acceptance threshold of $\rho_u > 0$. Additionally, the
trust-region radii $\trustRadius$ and $\trustRadiusLP$ are
adjusted based on $\noisy{\rho}_k$:
\begin{enumerate}
\item
  The value of $\trustRadius$ is increased or decreased based
  on whether $\noisy{\rho}_k$ achieves a value of at least $\rho_{s}$.
  The decrease is such that the new trust-region radius is at
  most $\kappa_u < 1$ times as large as the previous one, thereby
  ensuring a true reduction, while being at least $\kappa_l \|d_k\|$
  (with $0 < \kappa_l \leq \kappa_u$) in order to prevent an immediate collapse
  of the trust region.
\item
  If $\noisy{\rho}_k$ achieves at least $\rho_u$, the LP trust-region radius
  $\trustRadiusLP$ is increased beyond $\normLP{d^{C}_k}$, as long
  as it does not exceed the upper bound of ${\trustRadiusLPMax \geq 1}$.
  The new LP trust-region radius is also only increased beyond
  $\trustRadiusLP$ if the full LP step $d^{\LP}$ was
  accepted (\ie $\alpha_k = 1$), indicating that the partially
  linearized model $\noisy{\ell}_k$ is a good approximation
  of $\noisy{\Phi}_k$ across the entire LP trust region.
  If $\noisy{\rho}_k$ falls short of $\rho_u$, $\trustRadiusLP$ is
  decreased while being kept within a factor of $\theta > 0$
  of $\normLP{d_k}$.
\end{enumerate}
\begin{rem}
When applied to problem~\eqref{eq:nonlinear_problem}, 
\cref{alg:noisy_sleqp} uses the strategies introduced in~\cite{sleqp}, 
which form the basis of the active set
method in the highly successful \textsc{Knitro} 
code~\cite{knitro}, which combines sequential linear programming
with equality constrained quadratic programming approaches in order
to achieve robust performance over a range of large-scale nonlinear 
programming problems.
\end{rem}
\begin{algorithm}
  \DontPrintSemicolon

\newcommand{\Assign}[2]{$#1 \leftarrow$ #2}

\LinesNumbered
\SetCommentSty{itshape}
\DontPrintSemicolon
\SetKwComment{Comment}{$\triangleright$\ }{}
\SetKwInOut{Input}{Input}
\SetKwInOut{Output}{Output}
\SetKwInOut{Parameters}{Parameters}
\SetKwFor{Until}{until}{}

\Input{Functions $\omega$, $\noisy{F}$, $\noisy{F'}$, \\
  Initial point $x_0 \in \R^n$, \\
  Initial trust region radii $0 <\trustRadiusLP_0 \leq \trustRadiusLPMax$, $0 <\trustRadius_0$}
\Parameters{Acceptance thresholds $0 < \rho_u < \rho_s < 1$, \\
  Step adjustments $0 < \kappa_l \leq \kappa_u < 1$, $\theta > 0$, \\
  Cauchy line search parameters $0 < \eta < 1$, $0 < \tau < 1$, \\
  Ratio stabilizer $\ratioParam > 0$, \\
  Maximum LP trust region radius $\trustRadiusLPMax \geq 1$}
\Output{Primal point $x^{*} \in \R^{n}$}
\BlankLine
\Assign{k}{0} \;
\Until{Some termination criterion is satisfied}{

  \label{line:lp_step}
  Compute LP step
  \begin{equation*}
    d^{\LP}_k \gets \argmin_{\normLP{d} \leq \trustRadiusLP_k} \noisy{\ell}_k(d)
  \end{equation*}

  \label{line:cauchy_step_start}
  \Assign{\alpha_k}{$\min\left( 1, \trustRadius_k / \|d^{\LP}_k\| \right)$} \;

  \While{$\noisy{\phi}(x_k) - \noisy{q}_k(\alpha_k d^{\LP}_k) < \eta \left[ \noisy{\phi}(x_k) - \noisy{\ell}_k(\alpha_k d^{\LP}_k) \right]$}{
    \Assign{\alpha_k}{$\tau \alpha_k$} \;
  }

  \label{line:cauchy_step_end}
  \Assign{d^{C}_k}{$\alpha_k d^{\LP}_k$} \;

  \label{line:actual_step}
  \Assign{d_k}{Step $d$ such that $\|d\| \leq \Delta_k$ and $\noisy{q}_k(d) \leq \noisy{q}_k(d^{C}_k) $} \;

  \label{line:reduction_ratio}
  Compute stabilized reduction ratio
  \begin{equation*}
    \noisy{\rho}_k \gets \frac{\noisy{\phi}(x_k) - \noisy{\phi}(x_k + d_k) + \ratioParam}{\noisy{\phi}(x_k) - \noisy{q}_k(d_k) + \ratioParam}
  \end{equation*}

  \eIf(\Comment*[f]{Accept step}){$\noisy{\rho}_k \geq \rho_u$}{
    \label{line:accept_begin}
    Set $x_{k +1} \gets x_{k} + d_k$ \;
    \label{line:accept_end}
    Pick $\trustRadiusLP_{k + 1} \in \left[\normLP{d^{C}_k}, \trustRadiusLPMax \right]$ such that $\trustRadiusLP_{k + 1} \leq \trustRadiusLP_{k}$ if $\alpha_k < 1$ \;
  }(\Comment*[f]{Reject step}){
    \label{line:reject_begin}
    Set $x_{k +1} \gets x_{k}$ \;
    \label{line:reject_end}
    Pick
    $\trustRadiusLP_{k + 1} \in \left[\min(\theta \normLP{d_k}, \trustRadiusLP_k), \trustRadiusLP_k \right]$ \;
  }
  \eIf{$\noisy{\rho}_k \geq \rho_s$}{
    Set $\trustRadius_{k + 1} \geq \trustRadius_k$ \;
  }{
    Choose $\trustRadius_{k + 1} \in [\kappa_l \|d_k\|, \kappa_u \trustRadius_k]$ \;
  }  
  \Assign{k}{$k + 1$} \;
}

\Return{$x^{*} = x_k$} \;

  \caption{A noise-tolerant algorithm to minimize $\phi(x) = \omega(F(x))$}
  \label{alg:noisy_sleqp}
\end{algorithm}

\section{Convergence Analysis of \cref{alg:noisy_sleqp}}
\label{sec:convergence}

We begin our convergence analysis with the introduction of the 
standing assumptions and a recap of the relevant stationarity
concept for \eqref{eq:convex_problem}
in \cref{sec:assumptions}.
We analyze the criticality measure for this notion of
stationarity in the noisy setting
in \cref{sec:criticality_measure}.
We use these results to prove lower bounds on the trust-region
radii that occur in \cref{alg:noisy_sleqp}
in \cref{sec:lb_trust_region_radius}, which are then used
to obtain sufficient decrease and, as a consequence, convergence
of the produced iterates to critical regions
in \cref{sec:global_convergence_theorem}.

\subsection{Standing Assumptions and Stationarity}
\label{sec:assumptions}
In order to study the convergence properties of
\cref{alg:noisy_sleqp}, we make the following assumptions
regarding the functions $\omega$, $F$, and the matrices $B_k$ used in
the quadratic models $\noisy{q}_k$.
\begin{ass}
  \label{ass:omega_lipschitz}
  $\omega$ is Lipschitz-continuous with constant $\LipOmega$, \ie it holds
  for all $x,y \in \R^{n}$ that
  \begin{equation*}
    |\omega(x) - \omega(y)| \leq \LipOmega \|x - y\|.
  \end{equation*}
\end{ass}
\begin{ass}
  \label{ass:F_lipschitz}
  $F$ and $F'$ are Lipschitz-continuous with constants $\LipF$ and
  $\LipFF$, \ie it holds for all $x,y \in \R^{n}$ that
  \begin{equation*}
    \begin{aligned}
      \|F(x) - F(y)\| &\leq \LipF \| x - y\| \text{ and} \\
      \|F'(x) - F'(y)\| &\leq \LipFF \| x - y\|. \\
    \end{aligned}
  \end{equation*}
\end{ass}
\begin{ass}
  \label{ass:hess_bounded}
  The Hessian approximations $B_k$ are bounded, \ie there exists $\beta > 0$ such that
  for all $d \in \R^n$, $k > 0$ it holds that
  \begin{equation*}
    \langle d, B_k d \rangle \leq \beta \|d\|^2.
  \end{equation*}
\end{ass}
Our aim in the following is to find a local optimum of $\phi$. A
first-order necessary condition
(see~\cite[pp. 184]{practical_methods}) of optimality
for~\eqref{eq:convex_problem} states that $x^{*} \in \R^n$ can only be
a local optimum if
\begin{equation*}
  \max_{\lambda \in \partial \omega(F(x^{*}))}
  \langle \lambda, F'(x^{*}) d \rangle
  \geq 0 \quad \text{for all } d \in \R^n,
\end{equation*}
where $\partial \omega(z)$ denotes the subdifferential of $\omega$ at
$z \in \R^{p}$.
To measure the distance of the iterates $x_k$ to a critical point of
$\phi$, the authors of~\cite{sleqp_convergence} use the objective
reduction with respect to the partially linearized model, given by
\begin{equation*}
  \Psi_k(\Delta) \define \phi(x_k) - \min_{\normLP{d} \leq \Delta } \ell_k(d),
\end{equation*}
Clearly, since $d = 0$ is feasible, the reduction $\Psi_k(\Delta)$ is
always non-negative. On the other hand, the following results
establishes that a vanishing reduction over a trust region
of a normalized size is tantamount to reaching a critical point.
\begin{lem}[\cite{conditions_for_convergence}, Lemma 2.1]
  \label{lem:general_converge}
  Let $\{x_k\}_{k \in \N}$ converge to $x_{*}$. If
  \cref{ass:omega_lipschitz,ass:F_lipschitz}
  hold and
  \begin{equation*}
    \lim_{k \to \infty} \Psi_k(1) = 0,
  \end{equation*}
  then $x_{*}$ is a critical point of $\phi$.
\end{lem}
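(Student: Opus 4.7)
The plan is to pass the hypothesis $\Psi_k(1) \to 0$ to the limit $k \to \infty$ and exploit the first-order optimality condition for convex optimization at $d=0$ of the limiting linearized model. To this end, I introduce the shorthand $\ell_*(d) \define \omega(F(x_*) + F'(x_*)d)$ and the limiting criticality quantity
\begin{equation*}
  \Psi_*(1) \define \phi(x_*) - \min_{\normLP{d} \leq 1} \ell_*(d).
\end{equation*}

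The first step is to show $\Psi_k(1) \to \Psi_*(1) = 0$. Since $\phi = \omega \circ F$ is continuous by \cref{ass:omega_lipschitz,ass:F_lipschitz}, and $x_k \to x_*$, I have $\phi(x_k) \to \phi(x_*)$. To control the minima, I use the estimate
\begin{equation*}
  |\ell_k(d) - \ell_*(d)| \leq \LipOmega \bigl( \|F(x_k) - F(x_*)\| + \|F'(x_k) - F'(x_*)\| \cdot \|d\| \bigr),
\end{equation*}
which, combined with $\|d\| \leq \gamma \normLP{d} \leq \gamma$ on the LP-unit ball and Lipschitz continuity of $F$ and $F'$, delivers uniform convergence of $\ell_k$ to $\ell_*$ on $\{\normLP{d} \leq 1\}$. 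Taking infima gives $\min_{\normLP{d} \leq 1} \ell_k(d) \to \min_{\normLP{d} \leq 1} \ell_*(d)$, and therefore $\Psi_k(1) \to \Psi_*(1)$. The hypothesis then forces $\Psi_*(1) = 0$, so $d = 0$ attains the minimum of the convex function $\ell_*$ over the LP-unit ball.

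Since $0$ is an interior point of $\{\normLP{d}\leq 1\}$ (as $\normLP{\cdot}$ is a norm), the constraint is locally inactive and $d=0$ is an unconstrained local minimizer of the convex function $\ell_*$; by first-order convex optimality, $0 \in \partial \ell_*(0)$. Applying the subdifferential chain rule for the composition of the convex function $\omega$ with the affine map $d \mapsto F(x_*) + F'(x_*)d$ yields $\partial \ell_*(0) = \trans{F'(x_*)}\,\partial \omega(F(x_*))$, so there exists $\lambda^* \in \partial \omega(F(x_*))$ with $\trans{F'(x_*)}\lambda^* = 0$. For any $d \in \R^n$ this gives $\langle \lambda^*, F'(x_*) d \rangle = 0$, hence
\begin{equation*}
  \max_{\lambda \in \partial \omega(F(x_*))} \langle \lambda, F'(x_*) d \rangle \geq 0,
\end{equation*}
which is the desired criticality condition.

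The step I expect to require the most care is the passage to the limit in the minimization problems, i.e.\ justifying $\min_{\normLP{d}\leq 1} \ell_k(d) \to \min_{\normLP{d}\leq 1} \ell_*(d)$. The Lipschitz estimates make this a routine uniform-convergence-on-a-compact-set argument, but one must be explicit about using the norm equivalence \eqref{eq:lp_norm_equiv} to bound $\|d\|$ in terms of $\normLP{d}$; the rest (continuity of $\phi$, subdifferential chain rule under an affine inner map for a convex outer function) is textbook convex analysis.
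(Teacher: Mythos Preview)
The paper does not give its own proof of this lemma; it is quoted from an external reference (\cite{conditions_for_convergence}, Lemma~2.1) and simply used as a black box. There is therefore nothing to compare your argument against.

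That said, your proof is correct and self-contained. The uniform-convergence step is exactly right: the Lipschitz bound
\begin{equation*}
  |\ell_k(d) - \ell_*(d)| \leq \LipOmega\bigl(\|F(x_k)-F(x_*)\| + \|F'(x_k)-F'(x_*)\|\,\|d\|\bigr)
\end{equation*}
together with \eqref{eq:lp_norm_equiv} and compactness of $\{\normLP{d}\le 1\}$ gives convergence of the minima, hence $\Psi_*(1)=0$. The subsequent step---$d=0$ is an interior minimizer of the convex function $\ell_*$, so $0\in\partial\ell_*(0)=\trans{F'(x_*)}\partial\omega(F(x_*))$ by the affine chain rule---cleanly yields a subgradient $\lambda^*$ with $\langle\lambda^*,F'(x_*)d\rangle=0$ for all $d$, which is precisely the criticality condition stated in the paper. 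No gaps.
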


\subsection{Analysis of Model Function and Criticality Measure
in the Presence of Noise}\label{sec:criticality_measure}
Since we do not have access to the values of $F$ and $F'$ required to
compute $\Psi_k$, we define a noisy measure of criticality via
\begin{equation*}
  \noisy{\Psi}_k(\Delta) \define \noisy{\phi}(x_k) - \min_{\normLP{d} \leq \Delta } \noisy{\ell}_k(d).
\end{equation*}
This function is also non-negative and we analyze its properties and relationship to $\Psi_k$
below. Since $\delta_{F}$ cannot be assumed to be continuous, so can't $\noisy{\phi}$. 
This differs from the analysis in~\cite{sleqp_convergence}, where the Lipschitz-continuity of
$\phi$ is used to argue that the reduction ratio approaches one if the
trust-region radius is driven to zero.  We can, however, state that
the criticality measures $\Psi_k$ and $\noisy{\Psi}_k$
are related by the following approximation result:
when considering a fixed $x_k$, we claim that
$\noisy{\Psi}_k(1) \to \Psi_k(1)$ for $\varepsilon_F \to 0$ and
$\varepsilon_{F'} \to 0$ and that we also have convergence of the minimizers of the
convex programs in the definitions of $\noisy{\Psi}_k(1)$ and
$\Psi_k(1)$. This follows from the epi-convergence
of the functionals
\begin{equation*}
  \mathcal{L}_{\varepsilon_{F},\varepsilon_{F'}}(d) \define \noisy{\ell}_k(d) + i_{\normLP{d} \leq \Delta}(d)
  \quad \text{and} \quad
  \mathcal{L}_{0,0}(d) \define \ell_k(d) + i_{\normLP{d} \leq \Delta}(d),
\end{equation*}
where $i_A : \R^d \to \{0,\infty\}$ is the indicator function of $A \subset \R^d$, that is
$i_A(x) = \infty$ if $x \notin A$ and $i_A(x) = 0$ else.
We recall that the functionals $\mathcal{L}_{\varepsilon_{F},\varepsilon_{F}}$ epi-converge to $\mathcal{L}_{0,0}$
if and only if for all $d \in \R^n$ the inequalities
\begin{gather*}
\begin{aligned}
\mathcal{L}(d) &\le \liminf_{\varepsilon_{F},\varepsilon_{F'} \to 0} \mathcal{L}_{\varepsilon_{F},\varepsilon_{F'}}(d_\varepsilon)
\text{ for all sequences } d_\varepsilon \to d
\text{ and}\\
\mathcal{L}(d) &\ge \limsup_{\varepsilon_{F},\varepsilon_{F'} \to 0} \mathcal{L}_{\varepsilon_{F},\varepsilon_{F'}}(d_\varepsilon)
\text{ for some sequence } d_\varepsilon \to d
\end{aligned}
\end{gather*}
hold, see, for example,~\cite[\S~7]{rockafellar2009variational}, which is shown below.
\begin{prop}
\label{prop:noiseless_convergence}
Let $\Delta > 0$. Then the functionals $\mathcal{L}_{\varepsilon_{F},\varepsilon_{F'}}$ epi-converge to
$\mathcal{L}_{0,0}$ for $\varepsilon_F \to 0$ and $\varepsilon_{F'} \to 0$. In particular,
$\noisy{\Psi}_k(1) \to \Psi_k(1)$ for $\varepsilon_F \to 0$ and $\varepsilon_{F'} \to 0$ in case of a fixed
$x_k \in \R^n$.
\end{prop}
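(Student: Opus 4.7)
The plan is to reduce the statement to a uniform closeness estimate between $\noisy{\ell}_k$ and $\ell_k$ on the compact LP trust region, because the indicator $i_{\normLP{d}\leq \Delta}$ does not depend on the noise levels and can be handled by an easy case split. The key estimate to establish first is
\begin{equation*}
    |\noisy{\ell}_k(d) - \ell_k(d)| \;\leq\; \LipOmega \bigl(\epsilon_F + \epsilon_{F'}\|d\|\bigr),
\end{equation*}
which follows by applying \cref{ass:omega_lipschitz} to the two arguments $\noisy{F}(x_k)+\noisy{F}'(x_k)d$ and $F(x_k)+F'(x_k)d$ of $\omega$ and then using the noise bounds $\|\delta_F(x_k)\|\leq \epsilon_F$ and $\|\delta_{F'}(x_k)\|\leq \epsilon_{F'}$. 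Together with the norm-equivalence constant $\gamma$ from \eqref{eq:lp_norm_equiv}, this yields uniform convergence of $\noisy{\ell}_k$ to $\ell_k$ on the compact set $\{d : \normLP{d}\leq \Delta\}$ as $\epsilon_F,\epsilon_{F'}\to 0$.

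Next, I would verify the two epi-convergence inequalities by splitting on whether $d$ lies in the LP trust region. For the liminf condition, fix $d \in \R^n$ and an arbitrary sequence $d_\varepsilon \to d$. If $\normLP{d} > \Delta$, then $\mathcal{L}_{0,0}(d) = \infty$; since $\normLP{\cdot}$ is continuous, eventually $\normLP{d_\varepsilon} > \Delta$, so $\mathcal{L}_{\epsilon_F,\epsilon_{F'}}(d_\varepsilon) = \infty$ and the inequality is trivial. If $\normLP{d} \leq \Delta$, the uniform estimate combined with continuity of $\ell_k$ gives $\noisy{\ell}_k(d_\varepsilon) \to \ell_k(d)$ for every subsequence along which $d_\varepsilon$ is feasible, while infeasible $d_\varepsilon$ contribute $+\infty$; either way the liminf is at least $\ell_k(d) = \mathcal{L}_{0,0}(d)$. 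For the limsup condition it is enough to pick the constant recovery sequence $d_\varepsilon := d$: if $\normLP{d}\leq \Delta$, pointwise convergence gives $\mathcal{L}_{\epsilon_F,\epsilon_{F'}}(d) = \noisy{\ell}_k(d) \to \ell_k(d) = \mathcal{L}_{0,0}(d)$; if $\normLP{d} > \Delta$, both sides equal $+\infty$.

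Finally, to pass from epi-convergence to $\noisy{\Psi}_k(1)\to \Psi_k(1)$, I would exploit compactness of $\{\normLP{d}\leq 1\}$ and decompose
\begin{equation*}
    \noisy{\Psi}_k(1) - \Psi_k(1) \;=\; \bigl(\noisy{\phi}(x_k) - \phi(x_k)\bigr) \;+\; \Bigl(\min_{\normLP{d}\leq 1} \ell_k(d) - \min_{\normLP{d}\leq 1} \noisy{\ell}_k(d)\Bigr).
\end{equation*}
The first summand is bounded by $\LipOmega \epsilon_F$, and the second is bounded in absolute value by $\sup_{\normLP{d}\leq 1}|\noisy{\ell}_k(d)-\ell_k(d)| \leq \LipOmega(\epsilon_F + \gamma \epsilon_{F'})$, both of which vanish as $\epsilon_F,\epsilon_{F'}\to 0$. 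Alternatively, one can invoke the standard variational-analytic consequence of epi-convergence combined with eventual level-set boundedness, see~\cite[\S~7]{rockafellar2009variational}. The only mild bookkeeping obstacle I anticipate is the liminf case when $d$ sits exactly on the boundary $\normLP{d}=\Delta$ and $d_\varepsilon$ approaches it from outside; then the sequence is infeasible with value $+\infty$, so the inequality is vacuously satisfied and no additional argument is required.
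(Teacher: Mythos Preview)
Your proof is correct and, for the epi-convergence part, follows essentially the same line as the paper: both verify the $\liminf$ and $\limsup$ inequalities directly, both use the constant recovery sequence $d_\varepsilon \coloneqq d$ for the $\limsup$, and both rely on the continuity of $\omega$ together with $\noisy{F}(x_k)\to F(x_k)$, $\noisy{F}'(x_k)\to F'(x_k)$. The paper organizes the $\liminf$ case via a ``without loss of generality the $\liminf$ is finite'' reduction and then extracts a feasible subsequence, whereas you split on whether $\normLP{d}\le\Delta$; these are equivalent bookkeeping choices.

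Where you genuinely differ is in the ``in particular'' clause. The paper deduces $\noisy{\Psi}_k(1)\to\Psi_k(1)$ from the abstract epi-convergence machinery (compact feasible set $\Rightarrow$ minimizers exist, epi-convergence $\Rightarrow$ infima converge). Your route is more elementary: you first establish the quantitative bound $|\noisy{\ell}_k(d)-\ell_k(d)|\le \LipOmega(\epsilon_F+\epsilon_{F'}\|d\|)$ and then bound $|\noisy{\Psi}_k(1)-\Psi_k(1)|$ directly by $\LipOmega\epsilon_F + \LipOmega(\epsilon_F+\gamma\epsilon_{F'})$. This buys you an explicit rate in the noise levels and avoids citing \cite{rockafellar2009variational} altogether; in fact your direct estimate makes the epi-convergence statement itself somewhat redundant for the purposes of the proposition, since the uniform convergence of $\noisy{\ell}_k$ to $\ell_k$ on the compact ball already delivers both conclusions.
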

\begin{proof}
We begin by showing the first inequality and consider $d_\varepsilon \to d$. W.l.o.g.\ we assume that
$\liminf_{\varepsilon_{F},\varepsilon_{F'} \to 0} \mathcal{L}_{\varepsilon_{F},\varepsilon_{F'}}(d_\varepsilon)  < C$
for some $C > 0$, which implies that there is a subsequence (denoted by $d_\varepsilon$ as well) such that
$\normLP{d_\varepsilon} \le \Delta$ for all elements $d_\varepsilon$. The continuity of the norm $\normLP{\cdot}$
yields $0 = i_{\normLP{d_\varepsilon} \le \Delta} = i_{\normLP{d} \le \Delta}$. Moreover,
$\noisy{F}(x_k) \to F(x_k)$ and $\noisy{F}'(x_k) \to F'(x_k)$ for $\varepsilon_F \to 0$ and $\varepsilon_{F'} \to 0$
and thus the continuity of $\omega$ gives $\noisy{\ell}_k(d_\varepsilon) \to \ell_k(d)$ and in turn
the first inequality.

We continue with the second inequality and consider the constant sequence $d_\varepsilon \coloneqq d$.
Then $i_{\normLP{d_\varepsilon} \le \Delta} = i_{\normLP{d} \le \Delta}$ and
$\noisy{F}(x_k) \to F(x_k)$ and $\noisy{F}'(x_k) \to F'(x_k)$
for $\varepsilon_F \to 0$ and $\varepsilon_{F'} \to 0$. Again, the continuity of $\omega$ gives
$\noisy{\ell}_k(d_\varepsilon) \to \ell_k(d)$ and in turn the second inequality.

The functionals $\mathcal{L}_{\varepsilon_F,\varepsilon_{F'}}$ always admit a minimizer because
the feasible set $\{d\,|\,\normLP{d} \leq \Delta\}$, on which  $\mathcal{L}_{\varepsilon_F,\varepsilon_{F'}}$ is finite,
is compact, a standard argument yields that all accumulation points of a sequence of minimizers of the
functionals
$\mathcal{L}_{\varepsilon_F,\varepsilon_{F'}}$ minimize the limit functional $\mathcal{L}_{0,0}$.
\end{proof}
Consequently, if we drive $\noisy{\Psi}_k$ to zero over the iterations, we have an
upper bound on $\Psi_k$, defining a critical region (sublevel set) into which the iterates converge.
\begin{lem}
  \label{lem:combined_lipschitz}
  Under \cref{ass:F_lipschitz} it holds for all $x, d \in \R^n$ that
  \begin{equation*}
    \|F(x + d) - F(x) - F'(x) d\| \leq \LipFF \|d\|^2.
  \end{equation*}
\end{lem}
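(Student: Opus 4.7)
The plan is to use the fundamental theorem of calculus for the vector-valued map $F$ to express the difference $F(x+d) - F(x)$ as a line integral of $F'$ along the segment from $x$ to $x+d$, subtract $F'(x)d$ so that the integrand becomes $F'(x+td) - F'(x)$, and then apply the Lipschitz bound $\LipFF$ on $F'$ from Assumption~\ref{ass:F_lipschitz} pointwise inside the integral.

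Concretely, since $F$ is continuously differentiable, I write
\begin{equation*}
  F(x+d) - F(x) = \int_0^1 F'(x+td)\,d\,\mathrm{d}t,
\end{equation*}
and, using $F'(x)d = \int_0^1 F'(x)\,d\,\mathrm{d}t$, subtract to obtain
\begin{equation*}
  F(x+d) - F(x) - F'(x)d = \int_0^1 \bigl(F'(x+td) - F'(x)\bigr)\,d\,\mathrm{d}t.
\end{equation*}
Taking norms, passing the $\|\cdot\|$ inside the integral (which is valid because the integrand is continuous in $t$), and using submultiplicativity $\|Av\| \le \|A\|\,\|v\|$ of the operator norm, I apply Assumption~\ref{ass:F_lipschitz} with points $x+td$ and $x$ to bound $\|F'(x+td) - F'(x)\| \le \LipFF \|td\| = \LipFF\, t\, \|d\|$. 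This gives
\begin{equation*}
  \|F(x+d) - F(x) - F'(x)d\| \le \LipFF \|d\|^2 \int_0^1 t\,\mathrm{d}t = \tfrac{1}{2}\LipFF \|d\|^2,
\end{equation*}
which is in particular $\le \LipFF \|d\|^2$, as claimed.

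There is essentially no hard step here: this is the standard Taylor remainder estimate for a Jacobian-Lipschitz map and actually yields the slightly sharper constant $\tfrac{1}{2}\LipFF$. The only point requiring a brief comment is the compatibility between the matrix norm implicit in Assumption~\ref{ass:F_lipschitz} and the vector $2$-norm on $\R^n$, which holds automatically when $\|F'(x) - F'(y)\|$ is interpreted as the induced operator norm; otherwise, one simply absorbs the equivalence constant between the chosen matrix norm and the operator norm into $\LipFF$.
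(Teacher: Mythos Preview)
Your proof is correct and follows essentially the same route as the paper, which simply cites Assumption~\ref{ass:F_lipschitz} together with the mean value theorem; your integral-remainder computation is exactly how that argument is carried out for vector-valued $F$, and you even obtain the sharper constant $\tfrac{1}{2}\LipFF$.
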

\begin{proof}
This follows directly from \cref{ass:F_lipschitz} and the mean value theorem.
\end{proof}

\begin{lem}
  \label{lem:exact_model_trial_diff}
  Under \cref{ass:omega_lipschitz,ass:F_lipschitz,ass:hess_bounded}, it holds that
  \begin{equation}
    | \noisy{\phi}(x_k + d_k) - \noisy{q}_k(d_k) | \
    \leq \FactConst + \FactLin \|d_k\| + \FactQuad \|d_k\|^2,
  \end{equation}
  where
  $\FactConst \define  2 \LipOmega \epsilon_{F}$,
  $\FactLin \define \LipOmega \epsilon_{F'}$, and
  $\FactQuad \define \LipOmega \LipFF + \onehalf \beta$.
\end{lem}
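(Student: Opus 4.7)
The plan is to expand the difference $\tilde{\phi}(x_k+d_k)-\tilde{q}_k(d_k)$ directly from the definitions and then bound the pieces by systematically peeling off the sources of error. By the definitions of $\tilde{\phi}$, $\tilde{q}_k$, and $\tilde{\ell}_k$, this difference equals
\[
  \omega\bigl(\tilde{F}(x_k+d_k)\bigr) - \omega\bigl(\tilde{F}(x_k) + \tilde{F}'(x_k)\,d_k\bigr) - \tfrac{1}{2}\langle d_k,B_k d_k\rangle.
\]
Applying \cref{ass:omega_lipschitz} to the first two $\omega$-terms and \cref{ass:hess_bounded} to the quadratic term reduces the problem to bounding $\|\tilde{F}(x_k+d_k)-\tilde{F}(x_k)-\tilde{F}'(x_k)d_k\|$.

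To control the latter, I would substitute $\tilde{F}=F+\delta_F$ and $\tilde{F}'=F'+\delta_{F'}$ and group terms as
\[
  \bigl[F(x_k+d_k)-F(x_k)-F'(x_k)d_k\bigr] + \bigl[\delta_F(x_k+d_k)-\delta_F(x_k)\bigr] - \delta_{F'}(x_k)d_k.
\]
The first bracket is estimated by $L^{F'}\|d_k\|^2$ via \cref{lem:combined_lipschitz}; the second bracket is bounded by $2\epsilon_F$ using the uniform bound on $\delta_F$ together with the triangle inequality; the third term is bounded by $\epsilon_{F'}\|d_k\|$ using the uniform bound on $\delta_{F'}$ and the operator norm inequality. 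Summing these three estimates gives
\[
  \|\tilde{F}(x_k+d_k)-\tilde{F}(x_k)-\tilde{F}'(x_k)d_k\| \le 2\epsilon_F + \epsilon_{F'}\|d_k\| + L^{F'}\|d_k\|^2.
\]

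Putting everything together, the $\omega$-difference is bounded by $L^\omega$ times the above, and the quadratic term contributes an additional $\tfrac{1}{2}\beta\|d_k\|^2$. Collecting coefficients of $1$, $\|d_k\|$, and $\|d_k\|^2$ yields exactly $M_0^\epsilon = 2L^\omega\epsilon_F$, $M_1^\epsilon = L^\omega\epsilon_{F'}$, and $M_2^\epsilon = L^\omega L^{F'} + \tfrac{1}{2}\beta$, as claimed. There is no real obstacle here; the only care required is to keep the noise contributions cleanly separated from the smooth Taylor-style remainder so that the three constants $M_j^\epsilon$ appear with the intended dependencies on $\epsilon_F$, $\epsilon_{F'}$, and the problem constants.
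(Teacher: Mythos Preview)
Your proposal is correct and follows precisely the route the paper sketches: the paper's own proof merely says that the claim follows from the Lipschitz continuity of $\omega$, the representations $\tilde F = F + \delta_F$, $\tilde F' = F' + \delta_{F'}$, \cref{lem:combined_lipschitz}, and the bounds on $\delta_F$, $\delta_{F'}$ via elementary computations, and you have spelled out exactly those computations with the same decomposition into Taylor remainder, function-noise, and derivative-noise contributions.
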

\begin{proof}
  The assumed Lipschitz continuity of $\omega$, $F$, $F'$, the representations $\noisy{F} = F + \delta_F$,
  $\noisy{F}' + \delta_{F'}$, \cref{lem:combined_lipschitz}, and the bounds on $\delta_F$, $\delta_{F'}$
  yield the claim with elementary computations.
\end{proof}
\begin{rem}
  If \cref{ass:omega_lipschitz,ass:F_lipschitz,ass:hess_bounded} hold
  in the noiseless case, the achievable reduction is related to the
  trial value via
  $|q_k(d_k) - \phi(x_k + d_k)| \leq M \|d_k\|^2$
  for some $M > 0$.  This is no longer the case in the
  noisy model.
\end{rem}
Several of the following results are due to~\cite{sleqp_convergence}
and are largely unaffected by moving from the noiseless to the noisy
regime. We refer to their counterparts in~\cite{sleqp_convergence}
and prove them in the appendix.  We begin by establishing that the
linearized model $\noisy{\ell}$ is still Lipschitz-continuous, albeit
with a Lipschitz-constant affected by the noise level $\epsilon_{F'}$:
\begin{lem}
  \label{lem:linear_zero_convexity}
  Under \cref{ass:omega_lipschitz,ass:F_lipschitz} it holds
  for all $d \in \R^n$ that
  \begin{equation*}
    |\noisy{\ell}_k(d) - \noisy{\ell}_k(0)| \leq \LipEll \normLP{d},
  \end{equation*}
  where $\LipEll \define \gamma \LipOmega (\LipFF + \epsilon_{F'})$.
\end{lem}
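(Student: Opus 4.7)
The plan is to expand the difference $\noisy{\ell}_k(d) - \noisy{\ell}_k(0)$ directly from the definition, apply the Lipschitz property of $\omega$, and then bound the resulting linearized increment by splitting $\noisy{F}'$ into its exact and noisy parts.

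First I would write
\begin{equation*}
  \noisy{\ell}_k(d) - \noisy{\ell}_k(0) = \omega\bigl(\noisy{F}(x_k) + \noisy{F}'(x_k)\, d\bigr) - \omega\bigl(\noisy{F}(x_k)\bigr)
\end{equation*}
and invoke \cref{ass:omega_lipschitz} with the two arguments of $\omega$, yielding
$|\noisy{\ell}_k(d) - \noisy{\ell}_k(0)| \le \LipOmega\, \|\noisy{F}'(x_k) d\|$. Submultiplicativity of the operator $2$-norm then gives $\|\noisy{F}'(x_k) d\| \le \|\noisy{F}'(x_k)\|\,\|d\|$.

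Next I would control $\|\noisy{F}'(x_k)\|$ by the triangle inequality: $\|\noisy{F}'(x_k)\| \le \|F'(x_k)\| + \|\delta_{F'}(x_k)\| \le \LipFF + \epsilon_{F'}$, using the uniform bound $\|\delta_{F'}\| \le \epsilon_{F'}$ from the noise model together with the constant furnished by \cref{ass:F_lipschitz} (since a Lipschitz assumption on $F$ yields a uniform bound on $\|F'\|$; this is the step where one has to match the paper's naming conventions for the constants, and is the only point where I would be cautious). Combining with $\|d\| \le \gamma \normLP{d}$ from \eqref{eq:lp_norm_equiv} and collecting constants produces the advertised bound with $\LipEll = \gamma \LipOmega (\LipFF + \epsilon_{F'})$.

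The hard part is essentially cosmetic: tracking which Lipschitz constant one actually gets as the uniform bound on $\|F'(x_k)\|$ and verifying that it agrees with the constant appearing in the statement. All remaining manipulations are a single application of the Lipschitz property of $\omega$, one triangle inequality, and the norm equivalence.
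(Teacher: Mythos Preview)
Your approach is correct and matches the paper's: apply the Lipschitz property of $\omega$, split $\noisy{F}'(x_k)=F'(x_k)+\delta_{F'}(x_k)$, bound each piece, and finish with the norm equivalence $\|d\|\le\gamma\normLP{d}$. Your caution about the constant is entirely justified: the uniform bound on $\|F'(x_k)\|$ that \cref{ass:F_lipschitz} actually delivers is $\LipF$ (the Lipschitz constant of $F$), not $\LipFF$ (the Lipschitz constant of $F'$), so the computation naturally produces $\gamma\LipOmega(\LipF+\epsilon_{F'})$; the appearance of $\LipFF$ in the stated $\LipEll$ looks like a typo in the paper, and indeed the paper's own proof sketch invokes the Lipschitz continuity of $F'$ and \cref{lem:combined_lipschitz}, neither of which furnishes a bound on $\|F'(x_k)\|$.
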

\begin{proof}
  The assumed Lipschitz continuity of $\omega$, $F'$, the representation $\noisy{F}' + \delta_{F'}$,
  \cref{lem:combined_lipschitz}. and the bound on $\delta_{F'}$ and the bound $\|\cdot\| \le \gamma \normLP{\cdot}$
  yield the claim with elementary computations.
\end{proof}
We proceed to examine the reduction according to the
partially linearized model as a function of the size of
an improvement step. The following result establishes that
the reduction is well behaved in the step size in following sense: 
the model reduction that is achieved for
a reduced step size is bounded from below by the model reduction
achieved without step reduction multiplied by the step reduction.
\begin{lem}
  \label{lem:linearized_convexity}
  It holds for all $\alpha \in [0, 1]$
  that
  \begin{equation*}
    \noisy{\phi}(x_k) - \noisy{\ell}_k(\alpha d) \geq \alpha [ \noisy{\phi}(x_k)  - \noisy{\ell}_k(d)].
  \end{equation*}
\end{lem}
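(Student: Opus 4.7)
The plan is to reduce the statement to a direct application of convexity of $\omega$. First I would observe that by definition $\noisy{\phi}(x_k) = \omega(\noisy{F}(x_k)) = \noisy{\ell}_k(0)$, so the claim is equivalent to
\begin{equation*}
  \noisy{\ell}_k(\alpha d) \leq \alpha \noisy{\ell}_k(d) + (1-\alpha) \noisy{\ell}_k(0).
\end{equation*}
In other words, it suffices to show that the map $\alpha \mapsto \noisy{\ell}_k(\alpha d)$ is convex on $[0,1]$ as a one-dimensional function for any fixed $d$.

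Next I would write out $\noisy{\ell}_k(\alpha d) = \omega\bigl(\noisy{F}(x_k) + \alpha \noisy{F}'(x_k) d\bigr)$ and use the affine identity
\begin{equation*}
  \noisy{F}(x_k) + \alpha \noisy{F}'(x_k) d \;=\; \alpha \bigl(\noisy{F}(x_k) + \noisy{F}'(x_k) d\bigr) + (1-\alpha)\, \noisy{F}(x_k).
\end{equation*}
Since $\omega$ is convex by the standing assumption (its epigraph is polyhedral, hence convex), applying the convexity inequality to this affine combination yields
\begin{equation*}
  \omega\bigl(\noisy{F}(x_k) + \alpha \noisy{F}'(x_k) d\bigr) \leq \alpha\, \omega\bigl(\noisy{F}(x_k) + \noisy{F}'(x_k) d\bigr) + (1-\alpha)\, \omega\bigl(\noisy{F}(x_k)\bigr),
\end{equation*}
which is precisely $\noisy{\ell}_k(\alpha d) \leq \alpha \noisy{\ell}_k(d) + (1-\alpha)\noisy{\ell}_k(0)$. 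Rearranging gives the claim.

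There is no real obstacle here; the lemma is essentially a restatement of convexity of $\omega$ pulled back through the affine map $d \mapsto \noisy{F}(x_k) + \noisy{F}'(x_k) d$. Notice that neither the noise bounds nor the smoothness of $F$ enter the argument, which is why the statement does not reference any of the standing assumptions on $F$ or $B_k$; only the convexity of $\omega$ (implicit in \cref{ass:omega_lipschitz} together with the polyhedral epigraph postulated in the problem setting) is needed.
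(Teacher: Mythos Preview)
Your argument is correct and matches the paper's own proof, which simply invokes the convexity of $\noisy{\ell}_k$ together with the observation $\noisy{\ell}_k(0) = \noisy{\phi}(x_k)$. You have just written out the convexity step for $\omega$ composed with the affine map more explicitly than the paper does.
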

\begin{proof}
  This follows directly from the fact that $\noisy{\ell}_k$ is convex,
  where we note that $\noisy{\ell}_k(0) = \noisy{\phi}(x_k)$ holds for $d = 0$.
\end{proof}
Next, we establish that the criticality $\noisy{\Psi}_k(\Delta)$ for a given
trust-region radius $\Delta > 0$ is bounded below by $\noisy{\Psi}_k(1)$ 
multiplied by $\Delta$ if the latter is less than one. This holds in
particular during the computation of the LP step in \cref{alg:noisy_sleqp}.
The proof requires the relationship established in \cref{lem:linearized_convexity}.
\begin{lem}[Lemma 3.2 in~\cite{sleqp_convergence}]
  \label{lem:critical_normalization}
  It holds for any $\Delta > 0$
  that
  \begin{equation*}
    \noisy{\Psi}_k(\Delta) \geq \min(\Delta, 1) \noisy{\Psi}_k(1).
  \end{equation*}
\end{lem}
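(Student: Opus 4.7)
The plan is a direct case distinction on whether $\Delta \ge 1$ or $\Delta < 1$, leveraging monotonicity of the feasible set in the first case and the scaling estimate from \cref{lem:linearized_convexity} in the second.

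First I would handle the easy case $\Delta \ge 1$. The feasible set $\{d : \normLP{d} \le \Delta\}$ contains $\{d : \normLP{d} \le 1\}$, so the minimum of $\noisy{\ell}_k$ over the larger set is no larger than over the smaller one. Subtracting both sides from $\noisy{\phi}(x_k)$ reverses the inequality and yields $\noisy{\Psi}_k(\Delta) \ge \noisy{\Psi}_k(1) = \min(\Delta,1)\,\noisy{\Psi}_k(1)$.

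The substantive case is $\Delta \in (0,1)$. Let $d^{\ast}$ be a minimizer of $\noisy{\ell}_k$ over $\normLP{d} \le 1$, which exists by compactness as noted after \cref{prop:noiseless_convergence}, so that $\noisy{\Psi}_k(1) = \noisy{\phi}(x_k) - \noisy{\ell}_k(d^{\ast})$. Set $\alpha \coloneqq \Delta \in (0,1]$; then $\alpha d^{\ast}$ is feasible for the $\Delta$-constrained problem since $\normLP{\alpha d^{\ast}} = \Delta \normLP{d^{\ast}} \le \Delta$. Consequently,
\begin{equation*}
\noisy{\Psi}_k(\Delta) \ge \noisy{\phi}(x_k) - \noisy{\ell}_k(\alpha d^{\ast}).
\end{equation*}
Applying \cref{lem:linearized_convexity} with $d = d^{\ast}$ and the above $\alpha$ bounds the right-hand side below by $\alpha [\noisy{\phi}(x_k) - \noisy{\ell}_k(d^{\ast})] = \Delta \, \noisy{\Psi}_k(1)$, which is exactly $\min(\Delta,1)\,\noisy{\Psi}_k(1)$ in this regime.

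There is no real obstacle here beyond identifying the correct scaling $\alpha = \Delta$ so that $\alpha d^{\ast}$ is admissible in the shrunken trust region; the rest is the convexity-based scaling inequality already proved in \cref{lem:linearized_convexity}. Combining both cases gives the claim.
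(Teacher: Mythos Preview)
Your proof is correct and follows essentially the same approach as the paper: a case distinction on $\Delta \gtrless 1$, monotonicity of the feasible set for $\Delta \ge 1$, and for $\Delta < 1$ scaling the unit-ball minimizer by $\Delta$ and invoking \cref{lem:linearized_convexity}. The only cosmetic difference is that the paper names the minimizer $d_1$ while you call it $d^{\ast}$.
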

\proofAppendix
The next result states that when progress is possible with respect to
the criticality $\noisy{\Psi}_k(1)$, the LP step either lies
on the trust-region boundary or has a norm proportional to
$\noisy{\Psi}_k(1)$.
\begin{lem}[Lemma 3.3 in~\cite{sleqp_convergence}]
  \label{lem:lp_step_criticality}
  If \cref{ass:omega_lipschitz,ass:F_lipschitz} hold and that
  ${\noisy{\Psi}_k(1) \neq 0}$. Let $d_{\Delta}$ be a minimizer
  achieving $\noisy{\Psi}_k(\Delta)$ for some $\Delta > 0$. Then it
  follows that
  \begin{equation*}
    \normLP{d_{\Delta}} \geq \min \left( \Delta, \frac{\noisy{\Psi}_k(1)}{\LipEll} \right).
  \end{equation*}
\end{lem}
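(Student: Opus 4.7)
The plan is to split on whether the trust-region constraint is active at $d_{\Delta}$. In both cases the bound $|\noisy{\ell}_k(d) - \noisy{\ell}_k(0)| \leq \LipEll \normLP{d}$ from \cref{lem:linear_zero_convexity} together with the convexity of $\noisy{\ell}_k$ (inherited from $\omega$ composed with an affine map) is what drives the estimate.

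First I would observe that, by feasibility of $d_{\Delta}$, we always have $\normLP{d_{\Delta}} \leq \Delta$, and by \cref{lem:linear_zero_convexity},
\begin{equation*}
\noisy{\Psi}_k(\Delta) \;=\; \noisy{\ell}_k(0) - \noisy{\ell}_k(d_{\Delta}) \;\leq\; \LipEll \normLP{d_{\Delta}}.
\end{equation*}
Case 1 is $\normLP{d_{\Delta}} = \Delta$, in which $\normLP{d_{\Delta}} = \Delta \geq \min(\Delta, \noisy{\Psi}_k(1)/\LipEll)$ trivially.

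Case 2 is $\normLP{d_{\Delta}} < \Delta$, so $d_{\Delta}$ lies in the interior of the feasible LP ball. I would then argue that a minimizer of a convex function on a convex set that lies in the interior is a global unconstrained minimizer; the standard short argument is that otherwise one could move by a small amount along a chord to a strictly better feasible point, contradicting optimality. Consequently $\noisy{\ell}_k(d_{\Delta}) \leq \noisy{\ell}_k(d)$ for every $d \in \R^n$, and in particular taking the minimum over $\{\normLP{d} \leq 1\}$ yields $\noisy{\Psi}_k(\Delta) \geq \noisy{\Psi}_k(1)$. Combining with the Lipschitz bound above gives
\begin{equation*}
\LipEll \normLP{d_{\Delta}} \;\geq\; \noisy{\Psi}_k(\Delta) \;\geq\; \noisy{\Psi}_k(1),
\end{equation*}
so $\normLP{d_{\Delta}} \geq \noisy{\Psi}_k(1)/\LipEll \geq \min(\Delta, \noisy{\Psi}_k(1)/\LipEll)$.

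The only subtle point is the interior-minimizer argument in Case 2; everything else is a one-line application of the previous lemma. I would not need to invoke \cref{lem:critical_normalization} here, since the convexity argument delivers the stronger inequality $\noisy{\Psi}_k(\Delta) \geq \noisy{\Psi}_k(1)$ (not just $\min(\Delta,1)\noisy{\Psi}_k(1)$) in the case that matters. The hypothesis $\noisy{\Psi}_k(1) \neq 0$ is not actually used to make the inequality non-trivial in the argument itself, but it guarantees that the conclusion is non-vacuous (otherwise the right-hand side could be zero regardless).
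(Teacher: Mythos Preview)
Your proof is correct and uses the same two ingredients as the paper's argument: the Lipschitz estimate from \cref{lem:linear_zero_convexity} and the convexity of $\noisy{\ell}_k$. The only difference is organizational---the paper argues by contradiction and splits on $\Delta \geq 1$ versus $\Delta < 1$ (comparing against an explicit minimizer $d_1$), whereas your direct split on whether the constraint is active is slightly more streamlined and yields the stronger intermediate conclusion $\noisy{\Psi}_k(\Delta) \geq \noisy{\Psi}_k(1)$ in the interior case.
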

\proofAppendix
We are now ready to examine the step $d_k$ computed by
\cref{alg:noisy_sleqp} with respect to the reduction achieved by the
model $\noisy{q}_k$. Specifically, if progress can be made with
respect to the criticality $\noisy{\Psi}_k(1)$, then we can expect a
positive reduction in $\noisy{q}_k$. We use this result to prove
that the objective $\noisy{\phi}$ decreases as well as long as
$\noisy{\Psi}_k(1)$ is sufficiently large.
\begin{lem}[Lemma 3.4 in~\cite{sleqp_convergence}]
  \label{lem:model_decrease}
  Under \cref{ass:omega_lipschitz,ass:F_lipschitz}, the
  model decrease satisfies
  \begin{equation*}
    \noisy{\phi}(x_k) - \noisy{q}_k(d_k) \geq
    \noisy{\phi}(x_k) - \noisy{q}_k(d^C_k) \geq
    \eta \alpha_k \noisy{\Psi}_k(\trustRadiusLP_k) \geq
    \eta \alpha_k \min(\trustRadiusLP_k, 1) \noisy{\Psi}_k(1).
  \end{equation*}
\end{lem}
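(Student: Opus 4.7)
The plan is to read the chain of three inequalities from left to right and identify for each which ingredient of the algorithm or which earlier lemma does the work; essentially all pieces are already on the table, so the proof is a direct chaining argument.

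First, the leftmost inequality $\noisy{\phi}(x_k) - \noisy{q}_k(d_k) \geq \noisy{\phi}(x_k) - \noisy{q}_k(d^C_k)$ is immediate from the definition of the actual step in \cref{line:actual_step} of \cref{alg:noisy_sleqp}, where $d_k$ is explicitly chosen to satisfy $\noisy{q}_k(d_k) \leq \noisy{q}_k(d^C_k)$. No assumptions beyond the algorithm's own construction are needed here.

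Second, the middle inequality $\noisy{\phi}(x_k) - \noisy{q}_k(d^C_k) \geq \eta \alpha_k \noisy{\Psi}_k(\trustRadiusLP_k)$ is the core step. I would argue as follows: since $d^C_k = \alpha_k d^{\LP}_k$ and the while-loop in \crefrange{line:cauchy_step_start}{line:cauchy_step_end} terminates precisely when the reverse of its loop guard becomes true, the exit condition gives
\begin{equation*}
  \noisy{\phi}(x_k) - \noisy{q}_k(d^C_k) \geq \eta \bigl[\noisy{\phi}(x_k) - \noisy{\ell}_k(\alpha_k d^{\LP}_k)\bigr].
\end{equation*}
Applying \cref{lem:linearized_convexity} with step $d^{\LP}_k$ and reduction factor $\alpha_k \in [0,1]$ then yields
\begin{equation*}
  \noisy{\phi}(x_k) - \noisy{\ell}_k(\alpha_k d^{\LP}_k) \geq \alpha_k \bigl[\noisy{\phi}(x_k) - \noisy{\ell}_k(d^{\LP}_k)\bigr] = \alpha_k \noisy{\Psi}_k(\trustRadiusLP_k),
\end{equation*}
where the final equality uses that $d^{\LP}_k$ is, by construction in \cref{line:lp_step}, a minimizer of $\noisy{\ell}_k$ over the LP trust region. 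Chaining the two inequalities delivers the claim.

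Third, the rightmost inequality $\eta \alpha_k \noisy{\Psi}_k(\trustRadiusLP_k) \geq \eta \alpha_k \min(\trustRadiusLP_k, 1) \noisy{\Psi}_k(1)$ is nothing more than \cref{lem:critical_normalization} applied with $\Delta = \trustRadiusLP_k$, after multiplying by the nonnegative factor $\eta \alpha_k$. I expect no obstacle here: the only subtlety worth a sentence is that the line search is well-defined and terminates, so that $\alpha_k \in (0,1]$ is guaranteed and the exit condition really is attained; this is already implicit in the algorithm's description and follows from convexity of $\noisy{\ell}_k$ versus the continuity of $\langle d, B_k d\rangle$ near zero. The main delicate point, if any, is making sure that the line search exit condition is being read in the correct direction; once that is fixed, the proof is essentially three lines.
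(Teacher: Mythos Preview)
Your proposal is correct and follows essentially the same route as the paper: the first inequality from the choice of $d_k$ in \cref{line:actual_step}, the middle one from the Cauchy line-search exit condition combined with \cref{lem:linearized_convexity}, and the last from \cref{lem:critical_normalization}. The only difference is that you add a brief remark on termination of the line search, which the paper leaves implicit.
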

\proofAppendix
The following technical lemma shows that if
$\noisy{\Psi}_k(1) \neq 0$, then $\normLP{d^{C}_{k}}$ is bounded
below, which we will need to ensure that the updated trust region
radii do not collapse while progress in the objective can still be
made.
\begin{lem}[Lemma 3.6 in~\cite{sleqp_convergence}]
  \label{lem:cauchy_step_size_bound}
  Under \cref{ass:omega_lipschitz,ass:F_lipschitz,ass:hess_bounded} it holds that
  \begin{equation*}
    \begin{aligned}
      \alpha_k \trustRadiusLP_k & \geq \normLP{d^{C}_{k}} \\
                                & \geq
                                  \min
                                  \left(
                                  \frac{\trustRadius_k}{\gamma},
                                  \trustRadiusLP_k,
                                  \frac{\noisy{\Psi}_k(1)}{\LipEll},
                                  \min \left( 1, \frac{1}{\trustRadiusLP_k} \right)
                                  \frac{2(1 - \eta)\tau \noisy{\Psi}_k(1)}{\beta \gamma^2}
                                  \right).
    \end{aligned}
  \end{equation*}
\end{lem}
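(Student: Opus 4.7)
The first inequality $\alpha_k \trustRadiusLP_k \ge \normLP{d^{C}_{k}}$ is immediate: by definition $d^{C}_{k} = \alpha_k d^{\LP}_k$ and $d^{\LP}_k$ is feasible for the LP, so $\normLP{d^{C}_{k}} = \alpha_k \normLP{d^{\LP}_k} \le \alpha_k \trustRadiusLP_k$. For the lower bound my plan is a case distinction according to how $\alpha_k$ was produced in \crefrange{line:cauchy_step_start}{line:cauchy_step_end}.

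In the first case the \textbf{while}-loop is never entered, so $\alpha_k$ equals the initial value $\min(1, \trustRadius_k / \|d^{\LP}_k\|)$. If this minimum equals $1$, then $d^{C}_{k} = d^{\LP}_k$ achieves $\noisy{\Psi}_k(\trustRadiusLP_k)$, and \cref{lem:lp_step_criticality} directly gives $\normLP{d^{C}_{k}} \geq \min(\trustRadiusLP_k, \noisy{\Psi}_k(1)/\LipEll)$. Otherwise $\alpha_k = \trustRadius_k / \|d^{\LP}_k\|$ and hence $\|d^{C}_{k}\| = \trustRadius_k$; combining this with \eqref{eq:lp_norm_equiv} in the reversed form $\normLP{\cdot} \ge \|\cdot\|/\gamma$ yields $\normLP{d^{C}_{k}} \geq \trustRadius_k / \gamma$.

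In the second case the line search reduced $\alpha_k$ at least once, so the value $\alpha' \define \alpha_k / \tau$ right before the last reduction violated the acceptance condition, i.e.
\[
  \noisy{\phi}(x_k) - \noisy{q}_k(\alpha' d^{\LP}_k) < \eta \bigl[\noisy{\phi}(x_k) - \noisy{\ell}_k(\alpha' d^{\LP}_k)\bigr].
\]
Expanding $\noisy{q}_k = \noisy{\ell}_k + \tfrac12 \langle \cdot, B_k \cdot \rangle$, applying \cref{ass:hess_bounded}, and rearranging gives
\[
  (1-\eta)\bigl[\noisy{\phi}(x_k) - \noisy{\ell}_k(\alpha' d^{\LP}_k)\bigr] < \tfrac12 (\alpha')^2 \beta \|d^{\LP}_k\|^2.
\]
Since the initial step length is at most $1$ and the loop only multiplies by $\tau < 1$, we have $\alpha' \le 1$, so \cref{lem:linearized_convexity} applies and lower-bounds the left-hand side by $(1-\eta)\alpha' \noisy{\Psi}_k(\trustRadiusLP_k)$. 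Cancelling one factor of $\alpha'$, multiplying by $\tau \normLP{d^{\LP}_k}$ to form $\normLP{d^{C}_{k}} = \tau \alpha' \normLP{d^{\LP}_k}$, and using $\|d^{\LP}_k\|^2 \le \gamma^2 \normLP{d^{\LP}_k}^2 \le \gamma^2 \trustRadiusLP_k \normLP{d^{\LP}_k}$ collapses the $\normLP{d^{\LP}_k}$-dependence and leaves
\[
  \normLP{d^{C}_{k}} \ge \frac{2\tau(1-\eta)\noisy{\Psi}_k(\trustRadiusLP_k)}{\beta \gamma^2 \trustRadiusLP_k}.
\]
Finally \cref{lem:critical_normalization} supplies $\noisy{\Psi}_k(\trustRadiusLP_k) \ge \min(\trustRadiusLP_k, 1)\noisy{\Psi}_k(1)$, and $\min(\trustRadiusLP_k, 1)/\trustRadiusLP_k = \min(1, 1/\trustRadiusLP_k)$ yields the fourth term in the claimed minimum. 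Combining the three cases, $\normLP{d^{C}_{k}}$ is bounded below by the minimum of all four quantities, as asserted.

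The main obstacle is the bookkeeping in the line-search case: one needs $\alpha' \le 1$ to invoke \cref{lem:linearized_convexity}, and then has to trade the $\|d^{\LP}_k\|^2$ in the denominator for $\gamma^2 \trustRadiusLP_k \normLP{d^{\LP}_k}$ in precisely the way that produces the $\min(1, 1/\trustRadiusLP_k)$ factor after applying \cref{lem:critical_normalization}; the other branches are essentially one-line consequences of already proven lemmas.
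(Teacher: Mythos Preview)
Your proof is correct and follows essentially the same approach as the paper: the same two-way case split on whether the Cauchy line search terminates immediately, the same two sub-cases in the first branch handled via \cref{lem:lp_step_criticality} and \eqref{eq:lp_norm_equiv}, and the same chain in the second branch combining the failed decrease condition with \cref{ass:hess_bounded}, \cref{lem:linearized_convexity}, and \cref{lem:critical_normalization}. Your explicit observation that $\alpha' \le 1$ is needed to invoke \cref{lem:linearized_convexity} is a detail the paper leaves implicit.
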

\proofAppendix

\subsection{Lower Bounds on the Trust-region Radii}
\label{sec:lb_trust_region_radius}
We are now able to state a key result that provides lower bounds on
both the trust-region radius $\trustRadius$ for the quadratic model
and the LP trust-region radius $\trustRadiusLP$. It ensures that the
algorithm does not stall while progress can be made with respect to the 
noisy criticality $\noisy{\Psi}_k$. 
The proof strategy follows Lemma~3.7 in~\cite{sleqp_convergence} for the noiseless case.
In order to compensate for the noise, we need
to assume a sufficiently large stabilization 
parameter $\ratioParam$, which
in turn depends on the constants introduced by the noise. 
\begin{lem}
  \label{lem:tr_down}
  Consider an application of \cref{alg:noisy_sleqp} to the noisy
  variant of problem~\eqref{eq:convex_problem}. Suppose that
  \cref{ass:omega_lipschitz,ass:F_lipschitz,ass:hess_bounded} hold,
  $\noisy{\Psi}_k(1) \geq \delta > 0$ for all $k$, and that
  \begin{gather}
    \label{eq:required_stabilization}
    \ratioParam \geq \ratioParamOpt \define \frac{\FactConst + \FactLin}{1 - \rho_u},
  \end{gather}
  with $\FactConst$, $\FactLin$ from
  \cref{lem:exact_model_trial_diff}. Then it follows that
  \begin{equation*}
    \trustRadius_k \geq \Delta_{\min} \quad \mathrm{and}
    \quad \alpha_k \trustRadiusLP_k \geq \frac{\Delta_{\min}}{\gamma},
  \end{equation*}
  where $\Delta_{\min} = \min(A, B \delta)$ with
  \begin{equation*}
    \begin{aligned}
      A \define &
                  \min
                  \left(
                  \theta \gamma^{2},
                  \trustRadius_0,
                  \trustRadiusLP_0 \gamma,
                  \frac{\gamma}{\trustRadiusLPMax}
                  \right), \textrm{ and} \\
      B \define &
                  \min \left(
                  \frac{(1 - \rho_u) \eta}{\gamma \FactQuad \trustRadiusLPMax}
                  \min( \theta^{2}, \kappa_{l}^{2} ),
                  \frac{\gamma}{\LipEll},
                  \frac{2 (1 - \eta) \tau}{\beta \gamma \trustRadiusLPMax}
                  \right).
    \end{aligned}
  \end{equation*}
\end{lem}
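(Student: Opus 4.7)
The plan is to prove both lower bounds jointly by strong induction on $k$, following the structure of Lemma~3.7 in \cite{sleqp_convergence} and exploiting the stabilization~$\ratioParam$ to cancel the constant and linear noise contributions introduced by \cref{lem:exact_model_trial_diff}. The driving dichotomy at each iteration is: either $\|d_k\|$ is small enough that the stabilized ratio $\noisy{\rho}_k$ is forced above the relevant algorithmic thresholds (so neither radius can be shrunk), or $\|d_k\|$ is large enough that the rescaled updates $\kappa_l\|d_k\|$ and $\theta\normLP{d_k}$ already exceed $\Delta_{\min}$ and $\Delta_{\min}/\gamma$ respectively.

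For the base case $k = 0$, the first three arguments of the minimum defining $A$ directly give $\trustRadius_0 \geq A \geq \Delta_{\min}$ and $\gamma\trustRadiusLP_0 \geq A \geq \Delta_{\min}$. The fourth argument $\gamma/\trustRadiusLPMax$ comes into play through the inequality $\min(\trustRadiusLP_k, 1) \geq \trustRadiusLP_k/\trustRadiusLPMax$ (valid because $\trustRadiusLPMax \geq 1$), which converts the model decrease of \cref{lem:model_decrease} into a form to which the inductive bound on $\alpha_k\trustRadiusLP_k$ can be applied. Invoking \cref{lem:cauchy_step_size_bound} with $\noisy{\Psi}_0(1) \geq \delta$ and matching each of its four summands against an entry of $A$ or $B\delta$ then yields $\alpha_0\trustRadiusLP_0 \geq \Delta_{\min}/\gamma$.

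For the inductive step I would rewrite the acceptance test $\noisy{\rho}_k \geq \rho$ (for any $\rho \in (0,1)$) as
\begin{equation*}
  \noisy{q}_k(d_k) - \noisy{\phi}(x_k + d_k) \ \leq\ (1 - \rho)\bigl(\noisy{\phi}(x_k) - \noisy{q}_k(d_k) + \ratioParam\bigr),
\end{equation*}
bound the left-hand side via \cref{lem:exact_model_trial_diff} by $\FactConst + \FactLin\|d_k\| + \FactQuad\|d_k\|^2$, and use~\eqref{eq:required_stabilization} to absorb the constant and linear terms into $(1-\rho_u)\ratioParam$ in the regime $\|d_k\| \leq 1$ (in the complementary regime, the algorithmic fall-back $\kappa_l\|d_k\|$ dominates $\Delta_{\min}$ directly). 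What remains is the purely quadratic requirement $\FactQuad\|d_k\|^2 \leq (1-\rho_u) M_k$ with $M_k \define \noisy{\phi}(x_k) - \noisy{q}_k(d_k)$. Plugging in the model decrease $M_k \geq \eta\alpha_k\min(\trustRadiusLP_k, 1)\delta$ from \cref{lem:model_decrease}, the inductive bound $\alpha_k\trustRadiusLP_k \geq \Delta_{\min}/\gamma$, and the inequality $\min(\trustRadiusLP_k, 1) \geq \trustRadiusLP_k/\trustRadiusLPMax$ gives the sufficient condition $\FactQuad\|d_k\|^2 \leq (1-\rho_u)\eta\Delta_{\min}\delta/(\gamma\trustRadiusLPMax)$ for step acceptance. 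Taking the contrapositive, whenever the algorithm shrinks one of the radii, $\|d_k\|^2 > (1-\rho_u)\eta\Delta_{\min}\delta/(\gamma\FactQuad\trustRadiusLPMax)$; rescaling by $\kappa_l$ or $\theta$ and comparing with the first entry of the minimum defining $B$ (whose factor $\min(\theta^2,\kappa_l^2)$ is precisely what makes this squaring step consistent) yields $\kappa_l\|d_k\| \geq \Delta_{\min}$ and $\theta\normLP{d_k} \geq \Delta_{\min}/\gamma$, closing the induction.

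The main obstacle is the careful bookkeeping across the four algorithmic branches (step accepted versus rejected, combined with growth versus shrinkage of $\trustRadius$) and the four entries in the minimum defining $B$. The squares $\kappa_l^2$ and $\theta^2$ in $B$ arise precisely because the quadratic inequality $\FactQuad\|d_k\|^2 \geq c\,\delta\Delta_{\min}$ must be converted to the linear-in-$\delta$ estimate required for $\Delta_{\min} = \min(A, B\delta)$ to appear consistently as a fixed point on both sides of the induction. A secondary subtlety is reconciling the acceptance threshold $\rho_u$ (appearing in~\eqref{eq:required_stabilization}) with the shrinkage threshold $\rho_s$ for $\trustRadius$, which is handled case-by-case by reading the stabilized ratio identity with the appropriate $\rho \in \{\rho_u,\rho_s\}$ and using that the tighter bound derived from the $\rho_u$ side already furnishes the lower bound on $\|d_k\|$ that the $\rho_s$-based shrinkage rule requires.
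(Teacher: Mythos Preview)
Your proposal is correct and follows essentially the same route as the paper. The only cosmetic difference is the framing: you set up an explicit induction with $\Delta_{\min}$ playing the role of a fixed point of the update rules, whereas the paper derives a one-step recursive inequality for $\min(\trustRadius_k/\gamma,\trustRadiusLP_k)$ and then unrolls it back to $k=0$; the underlying case split on $\|d_k\|\gtrless 1$, the use of \cref{lem:cauchy_step_size_bound,lem:model_decrease,lem:exact_model_trial_diff}, and the absorption of $\FactConst+\FactLin$ into $(1-\rho_u)\ratioParam$ are identical.
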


\begin{proof}
  Using $\noisy{\Psi}_k(1) \ge \delta$, the bound in \cref{lem:cauchy_step_size_bound} becomes
  \begin{equation*}
    \begin{aligned}
      \normLP{d^{C}_{k}}
      \geq
      &
      \min
      \left(
        \frac{\trustRadius_k}{\gamma},
        \trustRadiusLP_k,
        \frac{\noisy{\Psi}_k(1)}{\LipEll},
        \min \left( 1, \frac{1}{\trustRadiusLP_k} \right)
        \frac{2(1 - \eta)\tau \noisy{\Psi}_k(1)}{\beta \gamma^2}
      \right) \\
      \geq
      &
      \min
      \left(
        \frac{\trustRadius_k}{\gamma},
        \trustRadiusLP_k,
        \frac{\delta}{\LipEll},
        \frac{2(1 - \eta)\tau \delta}{\trustRadiusLPMax \beta \gamma^2}
      \right) =
      \min
      \left(
        \frac{\trustRadius_k}{\gamma},
        \trustRadiusLP_k,
        \Delta_{\crit}
      \right),
    \end{aligned}
  \end{equation*}
  where
  \begin{equation*}
    \Delta_{\crit} \define \min \left( \frac{\delta}{\LipEll},
      \frac{2(1 - \eta)\tau \delta}{\trustRadiusLPMax \beta \gamma^2}  \right).
  \end{equation*}
  If a step is accepted in the $k$-th iteration (that is $\noisy{\rho}_k \geq \rho_u$),
  it follows that
  \begin{equation}
    \label{eq:lp_tr_bound_after_success}
    \begin{aligned}
      \trustRadiusLP_{k + 1} \geq
      \min
      \left(
        \frac{\trustRadius_k}{\gamma},
        \trustRadiusLP_k,
        \Delta_{\crit}
      \right).
    \end{aligned}
  \end{equation}
  If, on the other hand, the step is rejected, we can deduce
 the inequalities
  \begin{equation*}
      1 - \rho_u
      < \: 1 - \noisy{\rho}_k
      = \: \frac{\noisy{q}_k(d_k) -\noisy{\phi}(x_k + d_k)}{ \noisy{\phi}(x_k) - \noisy{q}_k(d_k) + \ratioParam}
      \leq \: \frac{\FactConst + \FactLin \|d_k\| + \FactQuad \|d_k\|^2}{ \eta \alpha_k \min \left( \trustRadiusLP_k, 1 \right) \noisy{\Psi}_k(1) + \ratioParam}
  \end{equation*}
  from \cref{lem:exact_model_trial_diff,lem:model_decrease}.
  Based on the bounds $\noisy{\Psi}_k(1) \ge \delta$, $\trustRadiusLP_k \le \trustRadiusLP_{\max}$, and
  $\normLP{d_k^C} = \normLP{\alpha_k d_k^{\LP}} \le \alpha_k \trustRadiusLP_k$,
  we can estimate the denominator via
  \begin{equation*}
    \begin{aligned}
      \: \eta \alpha_k \min \left( \trustRadiusLP_k, 1 \right) \noisy{\Psi}_k(1) + \ratioParam
      \geq & \: \eta \alpha_k \trustRadiusLP_k \min \left( 1, \frac{1}{\trustRadiusLP_k} \right) \delta + \ratioParam \\
      \geq & \: \eta \normLP{d^{C}_k} \frac{\delta}{\trustRadiusLPMax} + \ratioParam\\
      \underset{\eqref{eq:lp_tr_bound_after_success}}\geq & \: \eta \min \left( \frac{\trustRadius_k}{\gamma}, \trustRadiusLP_k, \Delta_{\crit} \right) \frac{\delta}{\trustRadiusLPMax}  + \ratioParam.
    \end{aligned}
  \end{equation*}
  Consequently, we obtain the relationship
  \begin{equation*}
    (1 - \rho_u) \eta \min \left( \frac{\trustRadius_k}{\gamma}, \trustRadiusLP_k, \Delta_{\crit} \right) \frac{\delta}{\trustRadiusLPMax}
    + (1 - \rho_u) \ratioParam \leq \FactConst + \FactLin \|d_k\| + \FactQuad \|d_k\|^2.
  \end{equation*}
  To finish the proof, we distinguish two cases with respect to $\|d_k\|$:
  \begin{enumerate}
  \item
    If $\|d_k\| \geq 1$, it follows that $\theta \normLP{d_k} \geq \theta \gamma \|d_k\| \geq \theta \gamma$,
    which implies that $\trustRadiusLP_{k + 1} \geq \min(\theta \gamma, \trustRadiusLP_{k})$.
  \item
    If $\|d_k\| < 1$, our lower bound on $\ratioParam$ implies
    \begin{equation}
      \label{eq:step_norm_bound}
      \frac{1 - \rho_u}{\FactQuad}
      \eta
      \min
      \left( \frac{\trustRadius_k}{\gamma}, \trustRadiusLP_k, \Delta_{\crit} \right)
      \frac{\delta}{\trustRadiusLPMax}
      \leq \|d_k\|^2.
    \end{equation}
    Recall that the next trust region radius $\trustRadiusLP_{k + 1}$ has a value of
    at least $\theta \normLP{d_k}$, which implies that
    \begin{equation*}
      \begin{aligned}
        \left( \trustRadiusLP_{k + 1} \right)^2
        \geq & \: \theta^2 \normLP{d_k}^2 
        \geq \: \theta^2 / \gamma^{2} \|d_k\|^2 \\
        \geq & \: \frac{\theta^2 (1 - \rho_u) \eta \delta}{\gamma^2 \FactQuad \trustRadiusLPMax}
               \min \left( \frac{\trustRadius_k}{\gamma}, \trustRadiusLP_k, \Delta_{\crit} \right) \\
        \geq & \: \min \left( \frac{\theta^2 (1 - \rho_u) \eta \delta}{\gamma^2 \FactQuad \trustRadiusLPMax} ,
               \frac{\trustRadius_k}{\gamma}, \trustRadiusLP_k, \Delta_{\crit} \right)^2.
      \end{aligned}
    \end{equation*}

  \end{enumerate}
  We combine both cases by taking their minimum, resulting in
  \begin{equation}
    \label{eq:lp_tr_bound_after_fail}
    \begin{aligned}
      \trustRadiusLP_{k + 1} \geq
      \min \left( \theta \gamma, \frac{\theta^2 (1 - \rho_u) \eta \delta}{\gamma^2 \FactQuad \trustRadiusLPMax} ,
      \frac{\trustRadius_k}{\gamma}, \trustRadiusLP_k, \Delta_{\crit} \right).
    \end{aligned}
  \end{equation}
  This lower bound on $\trustRadiusLP_{k + 1}$ dominates
  the previously shown lower bound~\eqref{eq:lp_tr_bound_after_success}
  for accepted iterates. In order to derive a uniform
  lower bound on $\trustRadiusLP_k$ (that is independent of $k$),  
  we may assume the worst case, i.e.~all steps
  are rejected, and resort to only~\eqref{eq:lp_tr_bound_after_fail}.

  Regarding the trust-region radius $\trustRadius_k$ for the quadratic model,
  we can follow a similar chain of reasoning as for $\trustRadius^{\text{LP}}_k$.  The
  radius is only decreased when the reduction ratio $\noisy{\rho}_k$
  is less than or equal to $\rho_s$, in which case it follows that
  $\trustRadius_{k + 1} \geq \kappa_l \|d_k\|$.  We can use the
  inequalities that lead to~\eqref{eq:lp_tr_bound_after_fail} to
  obtain that
  \begin{equation*}
      \frac{\trustRadius_{k + 1}}{\gamma}
      \geq \min
             \left( \theta \gamma, \frac{\trustRadius_k}{\gamma}, \trustRadiusLP_k, \Delta_{\crit},
               \frac{\delta \kappa_l^2 (1 - \rho_u) \eta}{\gamma^2 \FactQuad \trustRadiusLPMax}
             \right).
  \end{equation*}
  We can combine these estimates to obtain the lower bound
  \begin{equation*}
    \begin{aligned}
      \min \left( \frac{\trustRadius_{k+1}}{\gamma}, \trustRadiusLP_{k+1} \right) \geq
      \min
      \left( \theta \gamma, \frac{\trustRadius_k}{\gamma}, \trustRadiusLP_k, \Delta_{\crit},
      \frac{\theta^2 (1 - \rho_u) \eta \delta}{\gamma^2 \FactQuad \trustRadiusLPMax},
      \frac{\delta \kappa_l^2 (1 - \rho_u) \eta}{\gamma^2 \FactQuad \trustRadiusLPMax}
      \right).
    \end{aligned}
  \end{equation*}
  Starting from some $k$, we apply the inequality above recursively
  while decrementing $k$ and arrive at
  \begin{equation*}
    \begin{aligned}
      \min \left( \frac{\trustRadius_k}{\gamma}, \trustRadiusLP_k \right) \geq
      \min
      \left( \vphantom{\frac{\theta^2}{\trustRadiusLPMax}} \right. &
                                                                     \theta \gamma,
                                                                     \frac{\trustRadius_0}{\gamma}, \trustRadiusLP_0, \Delta_{\crit}, \\
                                                                   & \frac{\theta^2 (1 - \rho_u) \eta \delta}{\gamma^2 \FactQuad \trustRadiusLPMax},
                                                                     \frac{\delta \kappa_l^2 (1 - \rho_u) \eta}{\gamma^2 \FactQuad \trustRadiusLPMax}
                                                                     \left. \vphantom{\frac{\theta^2}{\trustRadiusLPMax}} \right)
                                                                     \enifed \Delta_{\low}.
    \end{aligned}
  \end{equation*}
  It must therefore hold that $\Delta_k \geq \gamma \Delta_{\low}$.
  Moreover, it holds that
  \begin{equation*}
    \begin{aligned}
      \alpha_k \trustRadiusLP_k \geq \normLP{d^{C}_k}
      \geq \min \left( \frac{\trustRadius_k}{\gamma},
      \trustRadiusLP_k, \Delta_{\crit} \right)
      \geq \min \left( \Delta_{\low}, \Delta_{\crit} \right)
       = \Delta_{\low}.
    \end{aligned}
  \end{equation*}
  The result follows from grouping the terms in $\gamma \Delta_{\low}$,
  according to whether or not they contain $\delta$.
\end{proof}
\subsection{Global Convergence Theorem}\label{sec:global_convergence_theorem}
We are now ready to establish the convergence of
\cref{alg:noisy_sleqp}. In order to simplify the proof of the main
theorem, we handle the special case in which
\cref{alg:noisy_sleqp} converges in a finite number of steps
separately.
\begin{lem}[Corollary 3.8 in~\cite{sleqp_convergence}]
  \label{lem:finite_successful_convergence}
  Consider an application of \cref{alg:noisy_sleqp} to the noisy
  variant of problem~\eqref{eq:convex_problem}. Suppose that
  \cref{ass:omega_lipschitz,ass:F_lipschitz,ass:hess_bounded}  
  and \eqref{eq:required_stabilization} hold.
  If there are finitely many successful iterations
  (that is $\noisy{\rho}_k \geq \rho_u$)
  during the execution of \cref{alg:noisy_sleqp}, then it holds that
  \begin{equation*}
    x_k = x^{*} \textrm{ and } \noisy{\Psi}_k(1) = 0
  \end{equation*}
  for all sufficiently large $k$.
\end{lem}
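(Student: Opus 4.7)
The plan is to handle the two claims in sequence, the first by direct inspection of the algorithm and the second by a contradiction argument that reuses the machinery of \cref{lem:tr_down}. First, let $K$ denote the index of the last successful iteration (taking $K = -1$ if there is none). For every $k \geq K+1$ the step is rejected, so the algorithm sets $x_{k+1} \leftarrow x_k$; hence $x_k = x_{K+1} \enifed x^*$ for all $k \geq K+1$. Because $\noisy{\ell}_k$ and $\noisy{\phi}(x_k)$ depend only on $x_k$, the noisy criticality $\noisy{\Psi}_k(1)$ is constant on this tail, equal to some value $\delta \geq 0$. The remaining task is to rule out $\delta > 0$.

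Suppose for contradiction that $\delta > 0$. Then $\noisy{\Psi}_k(1) \geq \delta$ for all $k \geq K+1$, so the hypotheses of \cref{lem:tr_down} are in force on the shifted sequence $\{x_k\}_{k \geq K+1}$, and the recursive argument in its proof goes through with the ``initial'' radii replaced by $\trustRadius_{K+1}, \trustRadiusLP_{K+1} > 0$. The resulting recursive lower-bound inequality yields a strictly positive constant $\Delta'_{\min}$, depending on $\delta$, on those radii, and on the algorithm and noise constants, such that $\trustRadius_k \geq \Delta'_{\min}$ for every $k \geq K+1$.

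On the other hand, every iteration $k \geq K+1$ has $\noisy{\rho}_k < \rho_u < \rho_s$, so the trust-region update rule places the new radius in $[\kappa_l \|d_k\|, \kappa_u \trustRadius_k]$ with $\kappa_u < 1$. Chaining this geometric decay gives $\trustRadius_k \leq \kappa_u^{k - K - 1} \trustRadius_{K+1} \to 0$, contradicting the lower bound. Hence $\delta = 0$, proving $\noisy{\Psi}_k(1) = 0$ for all sufficiently large $k$. The main technical subtlety is the shift-invariance of \cref{lem:tr_down}: its statement is phrased with index $0$, but the recursion on $\min(\trustRadius_k/\gamma, \trustRadiusLP_k)$ inside its proof only references the previous-iteration radii together with fixed algorithm and noise constants, so the conclusion transfers verbatim to any new starting index at which both radii are positive, which is guaranteed here by the algorithm's multiplicative updates.
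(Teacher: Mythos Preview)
Your argument is correct and follows essentially the same route as the paper: fix the tail where all steps are rejected, observe that $x_k$ and hence $\noisy{\Psi}_k(1)$ become constant, note that $\trustRadius_k$ decays geometrically since $\noisy{\rho}_k < \rho_s$, and derive a contradiction with the lower bound from \cref{lem:tr_down} if that constant were positive. Your explicit remark on the shift-invariance of \cref{lem:tr_down} (restarting the recursion at index $K+1$ with radii $\trustRadius_{K+1},\trustRadiusLP_{K+1}$) is a point the paper leaves implicit, so your version is actually slightly more careful there.
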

\proofAppendix

The following convergence theorem states that when the objective
of \eqref{eq:convex_problem} is bounded below, an application of
\cref{alg:noisy_sleqp} will produce one of two possible mutually
exclusive outcomes: the algorithm may stop at a critical point after
a finite number of iterations as described in
\cref{lem:finite_successful_convergence} or, alternatively,
\cref{alg:noisy_sleqp} visits a critical region infinitely
often. In terms of the functions $\omega$, $\noisy{F}$, and
$\noisy{F}'$, the \emph{critical region} is defined as
\begin{equation*}
  C(\delta) \define
  \left\{ x \middle| \noisy{\phi}(x)
  - \min_{\normLP{d} \leq 1}
  \omega(\noisy{F}(x) + \noisy{F'}(x) d)
  \leq \delta
  \right\}.
\end{equation*}
An iterate $x_k$ produced during the execution of
\cref{alg:noisy_sleqp} is contained in $C(\delta)$ if and only if
$\noisy{\Psi}_k(1) \leq \delta$.
What is more,
\cref{prop:noiseless_convergence} establishes that $\noisy{\Psi}_k(1)$
tends to $\Psi_k(1)$ as the errors $\epsilon_{F}$ and $\epsilon_{F'}$
approach zero. These results therefore suggest that the iterate is
\emph{close} to being optimal in the sense
of~\cref{lem:general_converge}.
\begin{thm}
  \label{thm:global_convergence}
  Consider an application of \cref{alg:noisy_sleqp} to the noisy
  variant of problem~\eqref{eq:convex_problem}. Suppose that
  \cref{ass:omega_lipschitz,ass:F_lipschitz,ass:hess_bounded}  
  and \eqref{eq:required_stabilization} hold. Then either
  \begin{equation*}
    \noisy{\Psi}_k(1) = 0 \: \: \textrm{for some} \: \: k \geq 0,
  \end{equation*}
  or
  \begin{equation*}
    \lim_{k \to \infty} \noisy{\phi}(x_k) = -\infty,
  \end{equation*}
  or there are infinitely many $k \in \N$ such that $x_k \in C(\delta_{\max})$, where
  \begin{equation*}
    \delta_{\max} \define
    \max
    \left(
    \sqrt{\frac{\ratioParam ( 1 -\rho_u ) \gamma \trustRadiusLPMax}
      {\rho_u \eta B}},
    \frac{\ratioParam ( 1 - \rho_u) \gamma \trustRadiusLPMax}
         {\rho_u \eta A}
         \right)
  \end{equation*}
  is given in terms of the constants $A, B$ from \cref{lem:tr_down}.
\end{thm}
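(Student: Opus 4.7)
The plan is to proceed by contradiction, ruling out the first two outcomes and showing the third must hold. Assume $\noisy{\Psi}_k(1) > 0$ for every $k$ and that $\noisy{\phi}(x_k)$ is bounded below. \cref{lem:finite_successful_convergence} excludes the scenario of finitely many successful iterations (it would force $\noisy{\Psi}_k(1) = 0$ eventually), so the set of iterations satisfying $\noisy{\rho}_k \geq \rho_u$ is infinite. Suppose, for contradiction, that only finitely many iterates lie in $C(\delta_{\max})$. Then we may pick $\delta > \delta_{\max}$ and $K \in \N$ with $\noisy{\Psi}_k(1) \geq \delta$ for all $k \geq K$, and apply \cref{lem:tr_down} with this $\delta$ on the tail $k \geq K$ to obtain $\trustRadius_k \geq \Delta_{\min}$ and $\alpha_k \trustRadiusLP_k \geq \Delta_{\min}/\gamma$, where $\Delta_{\min} = \min(A, B\delta)$.

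For each successful iteration $k \geq K$, rearranging $\noisy{\rho}_k \geq \rho_u$ produces
\begin{equation*}
\noisy{\phi}(x_k) - \noisy{\phi}(x_{k+1}) \geq \rho_u\bigl(\noisy{\phi}(x_k) - \noisy{q}_k(d_k)\bigr) - (1-\rho_u)\ratioParam.
\end{equation*}
Combining \cref{lem:model_decrease} with the elementary bound $\min(\trustRadiusLP_k, 1) \geq \trustRadiusLP_k/\trustRadiusLPMax$ and the lower bound $\alpha_k \trustRadiusLP_k \geq \Delta_{\min}/\gamma$ from \cref{lem:tr_down} yields $\noisy{\phi}(x_k) - \noisy{q}_k(d_k) \geq \eta \Delta_{\min}\, \delta/(\gamma \trustRadiusLPMax)$, so that the decrease per successful step is at least $\rho_u \eta \Delta_{\min}\, \delta/(\gamma \trustRadiusLPMax) - (1-\rho_u)\ratioParam$.

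The delicate part is verifying that the stated formula for $\delta_{\max}$ makes this lower bound strictly positive, and this is where I expect the main obstacle. I would split cases on the active branch of $\Delta_{\min}$: if $\Delta_{\min} = A$, then $A\delta > A\delta_{\max} \geq (1-\rho_u)\ratioParam\gamma\trustRadiusLPMax/(\rho_u\eta)$ by the second argument of the $\max$ in the definition of $\delta_{\max}$; if $\Delta_{\min} = B\delta$, then $B\delta^2 > B\delta_{\max}^2 \geq (1-\rho_u)\ratioParam\gamma\trustRadiusLPMax/(\rho_u\eta)$ by the first argument. In either case $\rho_u \eta \Delta_{\min}\,\delta/(\gamma \trustRadiusLPMax) > (1-\rho_u)\ratioParam$, giving a uniform constant $\bar c > 0$ with $\noisy{\phi}(x_k) - \noisy{\phi}(x_{k+1}) \geq \bar c$ for every successful $k \geq K$. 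Since successful iterations occur infinitely often, this forces $\noisy{\phi}(x_k) \to -\infty$, contradicting the assumed boundedness below. Once the two branches of the inner $\min$ (via $A$ and $B$) have been matched to the two branches of the outer $\max$ defining $\delta_{\max}$, the remainder is a standard telescoping argument; a minor bookkeeping issue is that \cref{lem:tr_down} is applied from index $K$ rather than $0$, which only perturbs the $\trustRadius_0,\trustRadiusLP_0$ terms in $A$ and does not affect the conclusion.
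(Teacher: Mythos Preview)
Your proposal is correct and follows essentially the same route as the paper: rule out finitely many successes via \cref{lem:finite_successful_convergence}, assume $\noisy{\Psi}_k(1)\geq\delta>\delta_{\max}$ eventually, combine \cref{lem:model_decrease} with the bound $\min(\trustRadiusLP_k,1)\geq\trustRadiusLP_k/\trustRadiusLPMax$ and \cref{lem:tr_down}, and then split on whether $\Delta_{\min}=A$ or $\Delta_{\min}=B\delta$ to match the two branches of $\delta_{\max}$ and force $\noisy{\phi}(x_k)\to-\infty$. Your remark about applying \cref{lem:tr_down} from index $K$ rather than $0$ is a genuine bookkeeping point that the paper's proof also glosses over; it does not break the argument but deserves the acknowledgement you gave it.
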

\begin{proof}
  If there are only finitely many accepted steps, the
  result follows from
  \cref{lem:finite_successful_convergence}, yielding the
  first possibility. Otherwise, we can assume that during the
  algorithm, an infinite number of accepted steps occurs. If
  ${\noisy{\phi}(x_k)}$ tends to $-\infty$, the second possibility
  occurs, so we can assume in the following that $\noisy{\phi}(x_k)$
  (and hence $\phi(x_k)$) is bounded below.

  Let $\mathcal{K}$ be the sequence of accepted steps, \ie
  consisting of those $k$ where $x_{k+1} \neq x_{k}$. Clearly, if
  $\liminf_{k \to \infty} \noisy{\Psi}_k(1) = 0$, then the result
  follows. So we can assume that there exists a $\delta > 0$ such that
  $\noisy{\Psi}_k(1) \geq \delta$ for all $k \geq k_0$.  The claim
  stating that the region $C(\delta_{\max})$ is visited infinitely
  often is tantamount to ensuring that $\delta \leq \delta_{\max}$,
  which will be the aim of the remainder of this proof.  For each
  $k \in \mathcal{K}, k \geq k_0$ we have that
  \begin{equation*}
    \frac{\noisy{\phi}(x_k) - \noisy{\phi}(x_{k + 1}) + \ratioParam}{\noisy{\phi}(x_k) - \noisy{q}_k(d_k)+ \ratioParam} \geq \rho_u > 0.
  \end{equation*}
  We deduce using \cref{lem:model_decrease} that
  \begin{equation*}
    \begin{aligned}
      \noisy{\phi}(x_k) - \noisy{\phi}(x_{k + 1})
      \geq & \: \rho_u
      \left[ \noisy{\phi}(x_k) - \noisy{q}_k(d_k)  \right] +
      (\rho_u - 1) \ratioParam \\
      \geq & \: \rho_u
      \left[ \eta \alpha_k \min(\trustRadiusLP_k, 1)
        \noisy{\Psi}_k(1) \right] +
      (\rho_u - 1) \ratioParam.
    \end{aligned}
  \end{equation*}
  It follows that
  \begin{equation*}
    \begin{aligned}
      \noisy{\phi}(x_k) - \noisy{\phi}(x_{k + 1})
      \geq & \: \rho_u
      \left[ \eta \alpha_k \trustRadiusLP_k \min
        \left( \frac{1}{\trustRadiusLP_k}, 1 \right)
        \noisy{\Psi}_k(1) \right] +
      (\rho_u - 1) \ratioParam\\
      \geq & \: \rho_u
      \left[ \eta \alpha_k \trustRadiusLP_k \frac{1}{\trustRadiusLPMax}
        \noisy{\Psi}_k(1) \right] +
      (\rho_u - 1) \ratioParam.
    \end{aligned}
  \end{equation*}
  We can now apply \cref{lem:tr_down} to bound
  $\alpha_k \trustRadiusLP_k$ below based on $\Delta_{\min}$
  and the constants $A$ and $B$:
  \begin{equation*}
    \begin{aligned}
      \noisy{\phi}(x_k) - \noisy{\phi}(x_{k + 1})
      \geq & \: \rho_u
      \left[ \eta \frac{\Delta_{\min}}{\gamma \trustRadiusLPMax}
        \noisy{\Psi}_k(1) \right] +
      (\rho_u - 1) \ratioParam\\
      = & \: \rho_u
      \left[ \eta \frac{\min(A, B \delta)}{\gamma \trustRadiusLPMax}
        \noisy{\Psi}_k(1) \right] +
      (\rho_u - 1) \ratioParam\\
      \geq & \: \rho_u
      \left[ \eta \frac{\min(A, B \delta)}{\gamma \trustRadiusLPMax}
        \delta \right] +
      (\rho_u - 1) \ratioParam.
    \end{aligned}
  \end{equation*}
  Let us assume towards a contradiction that $\delta > \delta_{\max}$.
  We distinguish two cases with respect to the minimum $\min(A,B \delta)$:
  \begin{enumerate}
  \item
    The minimum is attained at $A$, implying that
    \begin{equation*}
      \begin{aligned}
        \noisy{\phi}(x_k) - \noisy{\phi}(x_{k + 1})
        \geq & \: \rho_u
        \left[ \eta \frac{A}{\gamma \trustRadiusLPMax}
          \delta \right] +
        (\rho_u - 1) \ratioParam\\
        \geq & \: \rho_u
        \left[ \eta \frac{A}{\gamma \trustRadiusLPMax}
          \delta_{\max} \right] +
        (\rho_u - 1) \ratioParam + C_1.
      \end{aligned}
    \end{equation*}
    for a constant $C_1 > 0$. Using the fact that
    \begin{equation*}
      \delta_{\max} \geq \frac{\ratioParam( 1 - \rho_u) \gamma
        \trustRadiusLPMax} {\rho_u \eta A},
    \end{equation*}
    this implies that
    $\noisy{\phi}(x_k) - \noisy{\phi}(x_{k + 1}) \geq C_1 > 0$.
  \item
    The minimum is attained at $B \delta$,
    implying that
    \begin{equation*}
      \begin{aligned}
        \noisy{\phi}(x_k) - \noisy{\phi}(x_{k + 1})
        \geq & \: \rho_u
        \left[ \eta \frac{B \delta^2}{\gamma \trustRadiusLPMax}
          \right] +
        (\rho_u - 1) \ratioParam\\
        \geq & \: \rho_u
        \left[ \eta \frac{B \delta_{\max}^2}{\gamma \trustRadiusLPMax}
          \right] +
        (\rho_u - 1) \ratioParam +
        C_2 \\
      \end{aligned}
    \end{equation*}
    for a constant $C_2 > 0$. We now use the fact that
    \begin{equation*}
      \begin{aligned}
        \delta_{\max} \geq \sqrt{\frac{\ratioParam ( 1 -\rho_u ) \gamma \trustRadiusLPMax}
          {\rho_u \eta B}}
      \end{aligned}
    \end{equation*}
    to deduce that
    $\noisy{\phi}(x_k) - \noisy{\phi}(x_{k + 1}) \geq C_2 > 0$.
  \end{enumerate}
  In either case $\noisy{\phi}$ decreases by $\min(C_1, C_2) > 0$
  from $x_k$ to $x_{k+1}$. Since this decrease is strictly
  positive and there are infinitely many accepted steps in the
  sequence $\mathcal{K}$, it follows that $\noisy{\phi}(x_k)$ tends to
  $-\infty$, which is a contradiction. It must therefore hold that
  $\delta \leq \delta_{\max}$ as desired.
\end{proof}

\paragraph{Interpretation of Theorem~\ref{thm:global_convergence}}

In theoretical terms, the result in
\cref{thm:global_convergence} is as expected: the size of the
critical region $C$ depends on the stabilization parameter $\ratioParam$,
If we increase $\ratioParam$, \cref{alg:noisy_sleqp} can tolerate
a larger amount of noise at the cost of a decreased accuracy with
respect to the criticality measure $\noisy{\Psi}$. Of course,
problem~\eqref{eq:convex_problem} can generally also be unbounded. The
remaining case, where $\noisy{\Psi}_k(1) = 0$ for some $k \in \N$, can
involve different scenarios. If $\Psi_k(1) = 0$ holds as well, the
iterate $x_k$ is a critical point of~\eqref{eq:convex_problem}, which
is the ideal situation. Otherwise, the noises $\delta_F(x_k)$ and
$\delta_{F'}(x_k)$ attain values such that $x_k$ \emph{appears to be
critical} in the noisy model. In case of an unconstrained version
of~\eqref{eq:nonlinear_problem}, this is tantamount to a non-zero
gradient that is canceled out by noise.

For a function $F$ that is not afflicted by noise, \ie satisfying
$\epsilon_{F} = \epsilon_{F'} = 0$, it holds that $M_{\epsilon} = 0$,
allowing us to set $\ratioParam = 0$, whereby we recover the original
algorithm discussed in~\cite{sleqp_convergence}.  If we apply
\cref{thm:global_convergence} in this situation, it follow from
$\ratioParam = 0$ that $\delta_{\max} = 0$, implying that the region
$C(\delta_{\max})$ contains precisely the points critical with respect
to $\noisy{\Phi}$, which itself coincides with $\Phi$ in this
particular case.  Thus, the original convergence
result~\cite[Theorem~3.9]{sleqp_convergence} follows
for~\cref{alg:noisy_sleqp} in the noiseless case.

We can also apply~\cref{alg:noisy_sleqp} to solve smooth unconstrained
nonlinear problems affected by noise. To this end, let $p = 1$ and 
$\omega(x) = x$ for $x \in \R$, which leads to the Lipschitz constant
$\LipOmega = 1$ so that \cref{thm:global_convergence} suggests
a stabilization of
\begin{equation*}
  \ratioParam \geq \frac{2 \epsilon_{F} + \epsilon_{F'}}{1 - \rho_u},
\end{equation*}
which is weaker than the stabilization of $2 \epsilon_{F} / (1 - \rho_s)$
analyzed in~\cite{noisy_trust_region}. The weaker estimate is due to
the estimate from \cref{lem:linearized_convexity} that uses the
convexity of $\omega$. Instead, the authors of~\cite{noisy_trust_region}
have a Lipschitz continuous derivative of the objective at hand.
Then the criticality measure satisfies
\begin{equation*}
  x \in C(\delta) \:\: \Longleftrightarrow \:\:
  \left(- \min_{\normLP{d} \leq 1} \noisy{F}'(x) d \right) \leq \delta,
\end{equation*}
which in turn is equivalent to $\|\noisy{F}'(x) \|$ being bounded
above by a constant.

\section{Numerical Experiments}
\label{sec:numerics}
In order to illustrate the performance and examine the behavior of
\cref{alg:noisy_sleqp}, we implement the algorithm in \texttt{Python}
(3.8.10), using \texttt{numpy}~\cite{numpy} (1.24.1),
\texttt{scipy}~\cite{scipy} (1.9.3)
(including \texttt{HiGHS}~\cite{HiGHS} (1.2.0) as LP solver),
and \texttt{Ipopt}~\cite{ipopt} (3.11.9) to solve the
respective subproblems.  We generally compare the performance of the
\emph{classical} algorithm (\ie \cref{alg:noisy_sleqp} with a
stabilization of $\ratioParam = 0$), with its \emph{stabilized}
counterpart (where $\ratioParam > 0$).

In terms of termination criteria, we first of all impose an iteration
limit, after which the algorithm terminates. Secondly, we monitor the
LP trust-region radius, $\trustRadiusLP$. If the radius contracts to a
value close to zero (\num{1e-10}), we see this as a failure of the
algorithm and let it terminate. Lastly, when the noise criticality
$\noisy{\Psi}_k(1)$ falls below the threshold of \num{1e-6}, we
terminate the algorithm, knowing that the current iterate is very
close to being optimal. We then examine the \emph{final} iterate
$x^{f}$, \ie the iterate $x^{k}$ in~\cref{alg:noisy_sleqp}
of the iteration at which the termination criterion
becomes satisfied. If not indicated otherwise, we choose the
parameters of~\cref{alg:noisy_sleqp} according to the values
in~\cref{table:parameters} and the stabilization parameter
$\ratioParamOpt$.
\begin{table}[ht]
  \centering
  \caption{Parameters used for numerical experiments.}
  \begin{tabular}[ht]{llS}
  \toprule
  \textbf{Symbol} & \textbf{Explanation} & {\textbf{Value}} \\
  \midrule
  $\trustRadiusLP_0$ & Initial LP trust-region radius & 1 \\
  $\trustRadiusLPMax$ & Maximum LP trust-region radius & 10 \\
  $\trustRadius_0$ & Initial trust-region radius & 1 \\
  $\rho_u$ & Step acceptance threshold & 0.1 \\
  $\rho_s$ & Threshold for increase of $\trustRadius$ & 0.5 \\
  $\kappa_l$ & Lower bound for adjustment of $\trustRadius$ after failed step & 0.1 \\
  $\kappa_u$ & Upper bound for adjustment of $\trustRadius$ after failed step & 0.8 \\
  $\theta$ & Lower bound for adjustment of $\trustRadiusLP$ after failed step & 0.5 \\
  $\eta$ & Factor of relative decrease for Cauchy step & 0.1 \\
  $\tau$ & Shortening factor for Cauchy line search & 0.5 \\
  \bottomrule
\end{tabular}

  \label{table:parameters}
\end{table}
In order to obtain inexact evaluations of a given function
$F : \R^{n} \to \R^{p}$ and its derivative, we inject noise by setting
\begin{equation}
  \label{eq:default_noise_model}
  \begin{aligned}
    &\delta_{F}(x) = X_{F} \in \R^{p}, \: X_{F} \sim \mathbb{B}_{n}(\epsilon_{F}) \text{, and } \\
    &\delta_{F'}(x) = X_{F'} \in \R^{p \times n}, \: X_{F'} \sim \mathbb{B}_{np}(\epsilon_{F'}),
  \end{aligned}
\end{equation}
where $\mathbb{B}_{n}(s)$ denotes the uniform distribution on the
$n$-dimensional ball centered at the origin with radius $s$. Sampling
randomly from these distributions at each point $x$ ensures that the
amount of noise is bounded according to the noise levels $\epsilon_{F}$
and $\epsilon_{F'}$ while being sufficiently unpredictable.

In \cref{sec:failure_classical}, we augment an
example of a quadratic test problem from
\cite{noisy_trust_region} with
a non-smooth term. We obtain qualitatively
similar results in this case.
Then we compare and visualize the different
behaviors of the unstabilized algorithm and
the stabilized algorithm for the Rosenbrock
test function in \cref{sec:rosenbrock}.
In \cref{sec:image_denoising}, we apply
the algorithm to an image reconstruction problem
with total variation regularization
and assess the impact of different choices of the
stabilization parameter.
Finally, in \cref{sec:constrained_optimization},
we apply the algorithm in a penalty method for
a small constrained optimization problem from
\texttt{CUTest}~\cite{cutest} as motivated in the introduction, which points to
future research directions.
\Cref{sec:failure_classical,sec:rosenbrock,sec:constrained_optimization} use
essentially the same type of a non-smooth objective function that includes
an $\ell^1$-penalty term and we provide the required estimates of its
Lipschitz constant in \cref{sec:estimations}.

\subsection{Failure of the Classical Algorithm}
\label{sec:failure_classical}

To illustrate the difference in performance between the classical
algorithm and~\cref{alg:noisy_sleqp}, we consider the case of
$\ell^{1}$-penalized optimization problems of the form
$x \mapsto f(x) + \lambda \|x\|_1$ with a smooth function $f : \R^{n} \to
\R$. It is clear that these problems are non-smooth due to the
presence of the $\|\cdot\|_1$ term, while being expressible as
problems of type~\eqref{eq:convex_problem} based on suitable choices
of $\omega$ and $F$. This problem class also enables us to minimize
$\noisy{\ell}_k$ and $\noisy{q}_k$ over the trust regions defined in
terms of $\trustRadiusLP$ and $\trustRadius$ by solving linear or
quadratic programs respectively. What is more, the only curvature
information in this problem class is due to $f$, enabling us to either
use the Hessian of $\noisy{f}$ (or any quasi-Newton approximation) to
obtain the matrices $B_k$. We specifically examine the case where $f$
is a quadratic of the form
\begin{equation*}
  f(x) = \onehalf \langle x, D x \rangle,
\end{equation*}
where $D$ is the matrix in $\R^{n \times n}$ for $n = 8$ given as
\begin{equation*}
  D = \diag (10^{-5}, 10^{-4.75}, 10^{-4.5}, \ldots, 10^{-3.25}),
\end{equation*}
taken from \cite{noisy_trust_region}, where this optimization problem
has been studied without an $\ell^1$-penalty. It is apparent that the
optimal solution of this instance of~\eqref{eq:convex_problem} is
$x^{\ast} = 0$.  We set the parameter $\lambda$ to \num{1e-2} while
injecting noise according to~\eqref{eq:default_noise_model} with noise levels
$\epsilon_{F} = \num{1e-1}$ and $\epsilon_{F'} = \num{1e-5}$, and
initialize~\cref{alg:noisy_sleqp} with the initial point
$x_0 = (\num{1000}, 0, \ldots, 0)^T$, limiting the number of
iterations to \num{50}, and performing quadratic steps based on the
true Hessian $D$.

We show an example of the difference in performance in
\Cref{figure:quad_convergence}, where the values of the reduction
ratio are clipped to $\pm 5$ in order to properly display the
results. We see that the classical algorithm performs dramatically
worse than its stabilized counterpart. Indeed, the classical algorithm
stalls almost immediately, due to the reduction ratio $\rho_k$
becoming unreliable.  Consequently, the LP trust region collapses, and
the classical algorithm makes no progress towards optimality.
Conversely, the addition of a stabilization yields an algorithm
rapidly approaching the optimum, both in terms of primal distance and
objective value while maintaining a reasonably large LP trust-region
radius. Similarly, noisy and noiseless criticality decrease rapidly throughout
the iterations of the stabilized algorithm.
Unfortunately, the criticality bound established
in~\Cref{thm:global_convergence} attains a value of
$\delta_{\max} \approx \num{6000}$, limiting its use in terms of the
criticality actually achieved throughout the iterations.

We would like to point out that the failure of the classical algorithm
is not guaranteed in this scenario: By running the experiment with
\num{100} different random seeds we found that the classical algorithm
stalls in about half (\num{45}) of the cases, while performing well in
the other half. Conversely, the stabilized algorithm consistently
performs well in all cases. Its characteristics are qualitatively
similar to the case that is depicted in
\Cref{figure:quad_convergence}. A key problem of the classical
algorithm is therefore its unreliability when applied to noisy
functions.

\begin{figure}[ht]
  \centering
  \begin{tikzpicture}
  \begin{groupplot}[
    group style={
      group size=2 by 3,
      group name=plots,
      horizontal sep=1.5cm,
      vertical sep=1.5cm},
    width=.5\textwidth]

    \newcommand{\PlotComparison}[1]{
      \addplot+[mark=none,very thick,ClassicalColor] table[col sep=semicolon,x=Iteration,y=Classical,Classical] {#1};
      \addplot+[mark=none,StabilizedColor] table[col sep=semicolon,x=Iteration,y=Stabilized,Stabilized] {#1};
    }
    \nextgroupplot[Iterations,
      legend columns=2,
      legend to name={QuadraticCommonLegend},
      label={Reduction ratio},
      ymin=-5,
      ymax=5,
      ytick={-4, -2, ..., 4},
      minor ytick={-5, -3, ..., 5}]  
    \PlotComparison{Data/Quadratic/ReductionRatio.csv}
    
    \addlegendentry{Classical algorithm}
    \addlegendentry{Stabilized algorithm}

    \nextgroupplot[Iterations,ymode=log]
    \PlotComparison{Data/Quadratic/OptDist.csv}

    \nextgroupplot[Iterations,ymode=log,ytick={1e-8,1e-5,1e-2,1e1}]
    \PlotComparison{Data/Quadratic/LPRadius.csv}

    \nextgroupplot[Iterations,ymode=log]
    \PlotComparison{Data/Quadratic/Objective.csv}

    \nextgroupplot[Iterations,ymin=0,ymax=0.04]
    \PlotComparison{Data/Quadratic/Criticality.csv}

    \nextgroupplot[Iterations,ymin=0,ymax=0.04]
    \PlotComparison{Data/Quadratic/NoisyCriticality.csv}
  \end{groupplot}
  \node[align=center,text width=20em,anchor=north] at ([yshift=-2.5mm]plots c1r1.south) {\subcaption{(Clipped) reduction ratio $\noisy{\rho}_k$}};
  \node[align=center,text width=20em,anchor=north] at ([yshift=-2.5mm]plots c2r1.south) {\subcaption{Distance $\|x_k - x^{\ast}\|$}};
  \node[align=center,text width=20em,anchor=north] at ([yshift=-2.5mm]plots c1r2.south) {\subcaption{LP trust region radius $\trustRadiusLP_k$}};
  \node[align=center,text width=20em,anchor=north] at ([yshift=-2.5mm]plots c2r2.south) {\subcaption{Objective $\omega(F(x_k))$}};
  \node[align=center,text width=20em,anchor=north] at ([yshift=-2.5mm]plots c1r3.south) {\subcaption{Criticality $\Psi_k(1)$}};
  \node[align=center,text width=20em,anchor=north] at ([yshift=-2.5mm]plots c2r3.south) {\subcaption{Noisy criticality $\noisy{\Psi}_k(1)$}};

  \path (plots c1r3.south east) -- node[midway,yshift=-1.5cm,anchor=north]{\ref{QuadraticCommonLegend}} (plots c2r3.south west);
\end{tikzpicture}
  \caption{
    Performance on an $\ell^{1}$-penalized quadratic problem over 50
    iterations of \cref{alg:noisy_sleqp} with noise levels of
    $\epsilon_F = \num{1e-1}$ and $\epsilon_{F'} = \num{1e-5}$.}
  \label{figure:quad_convergence}
\end{figure}

\subsection{A Variant of the Rosenbrock Problem}
\label{sec:rosenbrock}

Following the previous experiments conducted based on the quadratic
function, we go on to examine the performance on a variant of the
famous Rosenbrock function, given by
\begin{equation*}
  R(x, y) \define (a - x)^{2} + b(y - x^2)^2
\end{equation*}
with parameters of $a = \num{1}$, $b = \num{100}$. The Rosenbrock
function has a unique optimum at $(x^{\ast}, y^{\ast}) = (a, a^{2})$,
\ie at $(1, 1)$ for our choice of parameters. We modify the problem by
adding a penalty of $\lambda \|(x, y) - (x^{\ast}, y^{\ast})\|_1$ with
a value of $\lambda = \num{1e-1}$, yielding a problem of
type~\eqref{eq:convex_problem} having the same global optimum as
$R$. We show an example of the difference in performance between the
classical and stabilized algorithms in
\Cref{figure:rosenbrock_trajectories}. The figure shows the
trajectories generated by \Cref{alg:noisy_sleqp} with and without
stabilization starting at $(x^{0}, y^{0}) = (\num{-1.5}, \num{0})$,
injecting noise according to~\eqref{eq:default_noise_model} for
different values of $\epsilon_{F}$ and a fixed value of
$\epsilon_{F'} = \num{1e-5}$, performing quadratic steps
according to the true Hessian of $R$ with an iteration limit of $\num{50}$.

Examining the trajectories of the classical algorithm, shown in
\Cref{figure:rosenbrock_classical_trajectories}, we find that for
different values of $\epsilon_{F}$, the trajectories are initially
almost identical, until the algorithm stalls at points with a distances
to the optimum increasing with $\epsilon_{F}$. Conversely, the
trajectories of the stabilized algorithm, shown in
\Cref{figure:rosenbrock_stabilized_trajectories}, vary significantly
for different noise levels. However, the stabilization yields
trajectories leading significantly closer to the optimum than those of
the classical algorithm even for larger noise levels. This is
confirmed by the statistics shown in
\Cref{figure:rosenbrock_opt_distance}, displaying the distribution of
the distance to the optimum for various noise levels for \num{100}
different random seeds, demonstrating that the stabilized algorithm consistently
outperforms the classical one, in particular for larger noise levels.

\begin{figure}[hb]
  \centering
  \begin{subfigure}[t]{\textwidth}
    \includegraphics[width=\textwidth]{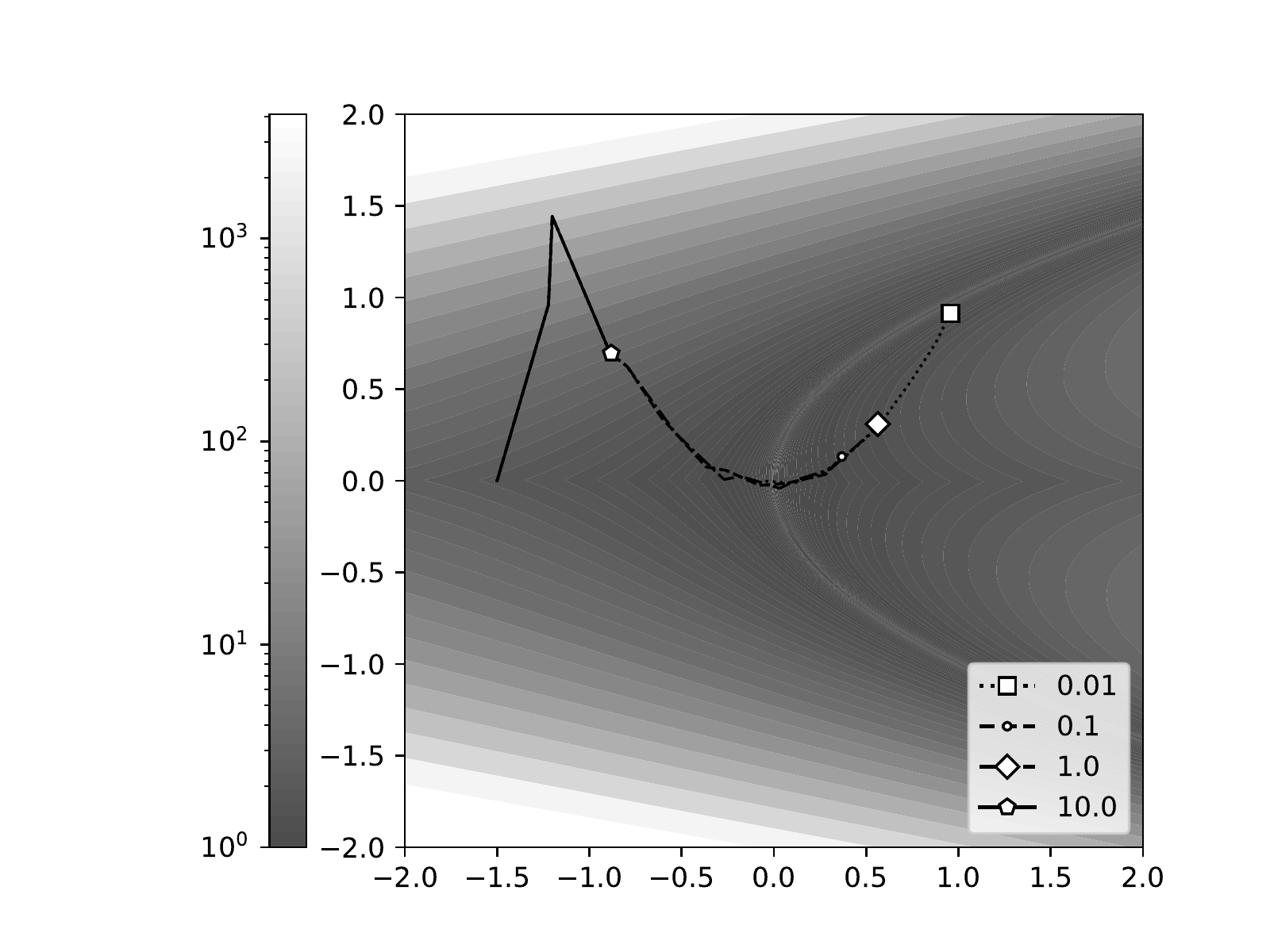}
    \caption{
      Trajectories of \cref{alg:noisy_sleqp}
      with $\ratioParam = 0$ for different noise levels.
    }
    \label{figure:rosenbrock_classical_trajectories}
  \end{subfigure}
  \\
  \begin{subfigure}[t]{\textwidth}
    \includegraphics[width=\textwidth]{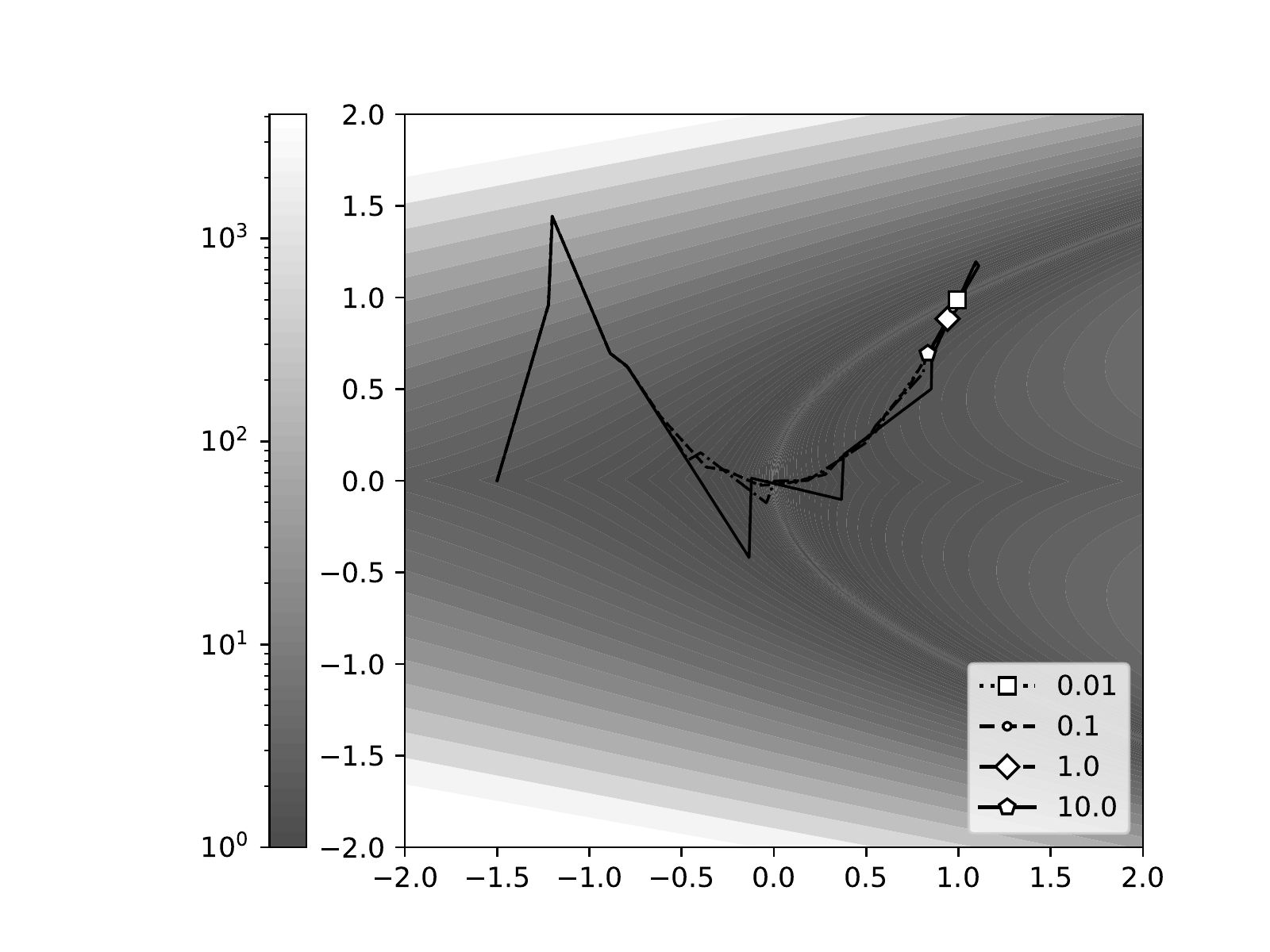}
    \caption{
      Trajectories of \cref{alg:noisy_sleqp} with $\ratioParam$ according
      to~\eqref{eq:required_stabilization} for different noise
      levels.
    }
    \label{figure:rosenbrock_stabilized_trajectories}
  \end{subfigure}
  \caption{
    Trajectories for the modified Rosenbrock problem plotted over the \emph{shifted} criticality
    $1 + \phi(x) - \min_{\normLP{d} \leq 1} \omega(F(x) + F'(x) d)$.
    The markers show the position of the final iterate.
  }
  \label{figure:rosenbrock_trajectories}
\end{figure}

\begin{figure}[hb]
  \centering
  \includegraphics{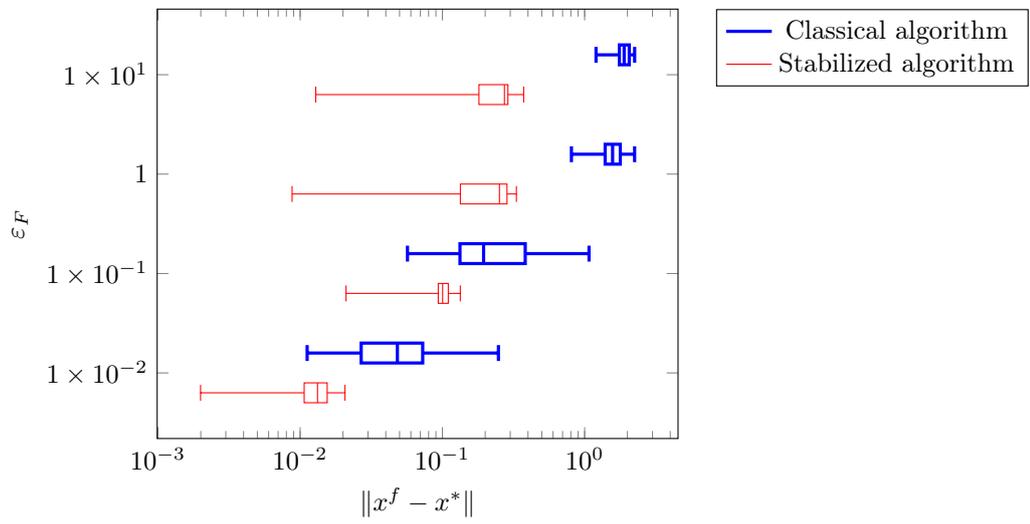}
  \caption{
    Distribution of the distance between the final iterate $x^{f}$ and noiseless optimum $x^{*}$
    for different values of the noise level
    $\epsilon_F$.}
  \label{figure:rosenbrock_opt_distance}
\end{figure}

\subsection{Image Reconstruction}
\label{sec:image_denoising}

Although this is not the focus of this article, we also provide a computational
example that has a meaningful problem size. Specifically, we consider an
artificial task of reconstructing an image under noisy observations.
That is we seek to recover a matrix $Y \in \R^{M \times N}$ with values normalized to
be in $[0, 1]$. In our setting, $Y$ is only
available in the form of noisy observations. Specifically, for an
input $X \in \R^{M \times N}$, the fidelity of $X$, given by the term
$\onehalf \normFrob{X - Y}^{2}$ and its derivative with respect to $X$
cannot be evaluated.
Instead, we have access to the map $X \mapsto \onehalf \normFrob{X - \noisy{Y}}^{2}$,
where $\noisy{Y}$ is a noisy version of $Y$, redrawn for each guess
$X$. We obtain the term $\noisy{Y}$ by sampling from a uniform distribution
\begin{equation*}
  \delta_{Y}(X) = Y_{F'} \in \R^{M \times N}, \: Y_{F'} \sim U_{M \times N}(-\imageNoise, \imageNoise),
\end{equation*}
and setting $\noisy{Y}$ to $Y + \delta_{Y}(X)$ clipped back to have
coefficients in $[0, 1]$. The amount of noise injected to the image is
in turn governed by the parameter $\imageNoise \geq 0$. This noise model
translates into noise injected into the evaluations
of $F$ and $F'$, which can be estimated in terms of $\imageNoise$, $M$,
and $N$ (see~\cref{sec:estimations}) while not conforming to the
noise model~\eqref{eq:default_noise_model}.
We also impose an anisotropic total variation (TV) regularization penalty,
defined as
\begin{equation*}
  \TV(X) \define \sum_{i = 1}^{M - 1} \sum_{j = 1}^{N} |X_{i +1, j} - X_{i, j}|
  + \sum_{i = 1}^{M} \sum_{j = 1}^{N - 1} |X_{i, j + 1} - X_{i, j}|,
\end{equation*}
to our objective, turning the problem non-smooth, balancing off
fidelity and regularity by a parameter $\lambda > 0$. The regularization
term can be expressed as $\|A_{M, N} X\|_1$ with a suitable matrix
$A_{M, N}$. Consequently, we can formulate the
reconstruction problem as problem of type~\eqref{eq:convex_problem},
consisting of a smooth term (the fidelity), and an
$\ell^{1}$-penalized linear function. Naturally, the regularization
does not suffer from any noise.

Based on a regularization parameter of $\lambda = \num{5e-3}$ we
reconstruct the image shown in \Cref{figure:denoising_original}. To avoid
having to solve large quadratic problems, we do not compute quadratic
steps and opt to instead increase the number of iterations to
\num{100} starting from $X_0 = 0$. As a baseline,
\Cref{figure:denoising_noiseless} shows the image when we apply
\Cref{alg:noisy_sleqp} to the original image (\ie setting
$\imageNoise = 0$).  The restored image closely resembles
the original one.

We proceed to study the effect of the value of $\ratioParam$ on the
quality of the reconstructed image. In principle, it must hold that
$\ratioParam \geq \ratioParamOpt$ in order for the criticality to
provably converge. It is however unclear whether setting $\ratioParam$
to $\ratioParamOpt$ yields the best results in practice. The large
value of $\delta_{\max}$ seen in \cref{sec:failure_classical} seems to
suggest that~\eqref{eq:required_stabilization} is rather
pessimistic. We therefore examine the performance of
\cref{alg:noisy_sleqp} for values of $\ratioParam$ not necessarily
satisfying the inequality.

To gauge performance, we record both the original and noisy objective
after the iterations.  The
results, shown in~\Cref{figure:denoising_performance}, demonstrate the
effect of $\ratioParam$: For small stabilization values,
\cref{alg:noisy_sleqp} stalls early on, as was the case in our
previous experiments. As we increase $\ratioParam$, there appears to
be an optimal choice or small region, where both the noisy and the
noiseless evaluation of the final objective are minimized.  This
effect is more pronounced for higher values of $\imageNoise$, where
the noiseless objective for $\ratioParam = 0$ is about 4 times as large
as that of the optimal choice of $\ratioParam$. It is
interesting to see that this sweet spot also shows in the noisy
objective, suggesting that noisy observations may be sufficient to
find it.  Lastly, as we increase $\ratioParam$ beyond the sweet spot,
the final objective increases sharply. This is likely due to the case
that the algorithm simply accepts too many steps, even when they are
in fact disadvantageous in terms of progressing towards an
optimum. Ultimately, for a sufficiently large value of $\ratioParam$,
all steps are accepted, which, as the final objective suggests, leads
to poor solutions.  The values of $\ratioParamOpt$ are given
by \largeNum{21990}, \largeNum{112140}, and \largeNum{229745} for
the respective noise levels, significantly exceeding the optimal values
and beyond the point, where all steps are accepted.

We also find that the objectives are consistent with the visual
appearance of the reconstructed images, shown in
\Cref{figure:denoising_results}: While setting $\ratioParam$ to zero
yields satisfactory results, even though a grainy appearance remains
for larger noise levels, a disproportionately large value of
$\ratioParam$ produces a distorted result with visible artifacts. For
our best guess of $\ratioParam$, the restored images do not suffer
from artifacts and closely resemble the original one even for larger
noise levels.

\begin{figure}[hb]
  \centering
  \begin{tikzpicture}
  \begin{groupplot}[
    group style={
      group size=1 by 3,
      group name=plots,
      xlabels at=edge bottom,
      vertical sep=1.5cm},
    width=.6\textwidth,
    xlabel={$\ratioParam$},
    ]

    \newcommand{\PlotNoiseLevel}[1]{
      \addplot+[mark=o,color=NoisyColor] table[col sep=semicolon,x=Stabilization,y=NoisyObjVal] {Data/Denoising/Results_#1.csv};
      \addplot+[mark=square,color=NoiselessColor] table[col sep=semicolon,x=Stabilization,y=ObjVal] {Data/Denoising/Results_#1.csv};
    }
    
    \nextgroupplot[
    legend to name={DenoisingCommonLegend},
    xmode=log
    ]

    \PlotNoiseLevel{0.01}

    \addlegendentry{$\omega(\noisy{F}(x))$}
    \addlegendentry{$\omega(F(x))$}
    
    \nextgroupplot[
    xmode=log
    ]

    \PlotNoiseLevel{0.05}

    \nextgroupplot[xmode=log]

    \PlotNoiseLevel{0.1}
  \end{groupplot}

  \node[align=center,inner sep=0,text width=20em,anchor=north] at (plots c1r1.outer south) {\subcaption{$\imageNoise = 0.01$,
  $\ratioParamOpt$~$\approx$~\largeNum{21990}}};
  \node[align=center,inner sep=0,text width=20em,anchor=north] at (plots c1r2.outer south) {\subcaption{$\imageNoise = 0.05$,
  $\ratioParamOpt$~$\approx$~\largeNum{112140}}};
  \node[align=center,inner sep=0,text width=20em,anchor=north] at (plots c1r3.outer south) {\subcaption{$\imageNoise = 0.1$,
  $\ratioParamOpt$~$\approx$~\largeNum{229745}}};

  \path (plots c1r1.north east) -- node[inner sep=0.,pos=0.0,xshift=.5cm,anchor=north west]{\ref{DenoisingCommonLegend}} (plots c1r1.south east);
\end{tikzpicture}
  \caption{
    Noiseless (red) and noisy evaluations (blue) of the objective values
    achieved by the final iterate of \cref{alg:noisy_sleqp} on an image reconstruction
    problem with different noise levels and stabilization parameters.}
  \label{figure:denoising_performance}
\end{figure}

\newcommand{\imageWidth}{.3\textwidth}
\newcommand{\raiseWidth}{.15\textwidth}

\newcommand{\resultImage}[2]{
  \begin{subfigure}[t]{\imageWidth}
    \includegraphics[width=\textwidth]{Figures/Denoising/Result_#1_#2}
    \caption{$\ratioParam = \num{#2}$}
  \end{subfigure}
}

\begin{figure}[hb]
  \centering
  \begin{tabularx}{\textwidth}{Xp{\imageWidth}p{\imageWidth}X}
    &
    \begin{subfigure}[t]{\imageWidth}
      \includegraphics[width=\textwidth]{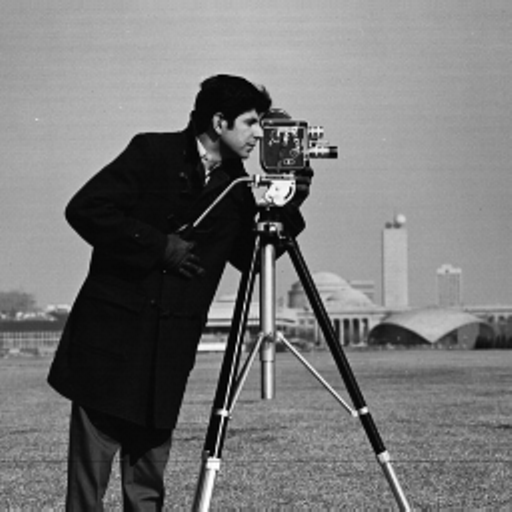}
      \caption{Original image}
      \label{figure:denoising_original}
    \end{subfigure} &
    \begin{subfigure}[t]{\imageWidth}
      \includegraphics[width=\textwidth]{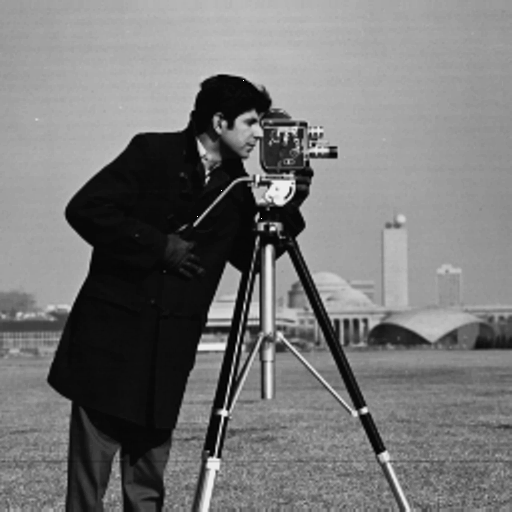}
      \caption{Noiseless reconstruction}
      \label{figure:denoising_noiseless}
    \end{subfigure}
    &
  \end{tabularx}
  \caption{Sample image for the image reconstruction.}
  \label{figure:denoising_images}
\end{figure}
\begin{figure}[hb]
  \centering
  \begin{tabularx}{\textwidth}{lp{\imageWidth}p{\imageWidth}p{\imageWidth}}
    \raisebox{\raiseWidth}{\rotatebox[origin=c]{90}{$\imageNoise = \num{0.01}$}}
    & \resultImage{0.01}{0.0} & \resultImage{0.01}{0.125} & \resultImage{0.01}{1} \\[.75cm]
    \raisebox{\raiseWidth}{\rotatebox[origin=c]{90}{$\imageNoise = \num{0.05}$}}
    & \resultImage{0.05}{0.0} & \resultImage{0.05}{8} & \resultImage{0.05}{256} \\[.75cm]
    \raisebox{\raiseWidth}{\rotatebox[origin=c]{90}{$\imageNoise = \num{0.1}$}}
    & \resultImage{0.1}{0.0} & \resultImage{0.1}{16} & \resultImage{0.1}{1024} \\
  \end{tabularx}
  \caption{Reconstructed images for different noise levels and stabilizations
    (left: no stabilization, center: best stabilization, right: large stabilization
    so that all iterates are accepted).}
  \label{figure:denoising_results}
\end{figure}

\subsection{Constrained Optimization}
\label{sec:constrained_optimization}

As a final example and in order to demonstrate the possible use of 
\cref{alg:noisy_sleqp} as a subproblem solver in constrained optimization algorithms,
we study a constrained optimization problem of the type~\eqref{eq:nonlinear_problem}.
Specifically, we examine the behavior of \cref{alg:noisy_sleqp} when applied to the
\texttt{HS71} benchmark problem of the \texttt{CUTest}~\cite{cutest} suite.
The problem is given as
\begin{equation*}
  \label{eq:HS71}
  \tag{HS71}
  \begin{aligned}
    \min_{x} \quad & x_{1}  x_{4}  (x_{1} + x_{2} + x_{3}) + x_{3} \\
    \st \quad &  x_{1}^{2} + x_{2}^{2} + x_{3}^{2} + x_{4}^{2} = 40 \\
                   &  x_{1} x_{2} x_{3} x_{4} \geq 25 \\
                   & 1 \leq x \leq 5,
  \end{aligned}
\end{equation*}
leading to suitable functions $g$ and $h$ according to~\eqref{eq:nonlinear_problem}.
The problem features of four bounded optimization variables, two
nonlinear constraints, and a nonlinear objective with an optimum at
the point $x^{*} \approx (1., 4.74, 3.82, 1.38)$ that satisfies MFCQ
and in turn the conditions for the convergence of an exacty penalty
method. As mentioned in the introduction, we solve problems of
type~\eqref{eq:nonlinear_problem}
by using the penalty function~\eqref{eq:penalty_func} with a suitable
penalization of $\nu > 0$, knowing that convergence is guaranteed for
a sufficiently large $\nu$ under mild assumptions, \ie MFCQ. Increasing
$\nu$ beyond its required value may slow down practical performance,
but convergence is maintained. Consequently, $\nu$ is often set to
a small initial value and increased when necessary~(see for example~\cite{sleqp}).

If the functions in~\eqref{eq:nonlinear_problem} are affected by
noise, the choice of $\nu$ is not as straightforward: The required
stabilization~\eqref{eq:required_stabilization} is dependent on
$\FactConst$ and therefore $\LipOmega$, which increases with $\nu$.
Similarly, the value of $\delta_{\max}$ increases with $\LipOmega$ and
therefore with $\nu$, so a large penalization has the adverse effect
of increasing the size of the critical region $C(\delta_{\max})$,
making a suitable choice of the parameter an interesting problem in
and of itself. What is more, if the constraint functions $g$ and $h$
suffer from noise, we cannot assume the iterates $x_k$ to tend towards
feasibility in the underlying noiseless problem regardless of the
value of $\nu$. 

Therefore, for our investigation, we consider a fixed value of $\nu$, which is
suitable to solve the noiseless variant of~\eqref{eq:HS71}, in our case $\nu = 100$.
We once again inject noise according to~\eqref{eq:default_noise_model}
with different values of $\epsilon_F$ and a fixed $\epsilon_{F'} = 0$. Specifically,
we run the algorithm with the choice $\epsilon_F = 10^{-2}$ and $\epsilon_F = 10^{-1}$.
Since a reasonable choice of the quadratic model would likely
require some dual estimation, we once again opt to skip quadratic
steps and instead set the iteration limit to~\num{100}.
After the algorithm has terminated, we record the criticality~$\Psi_k(1)$,
the feasibility residual,
\begin{equation*}
  \max(\|g(x_k)^{+}\|_{\infty}, \|h(x^{k})\|_{\infty}),
\end{equation*}
as well as their noisy counterparts for different values of
$\ratioParam$~(see \Cref{figure:constrained_performance}).

As was the case for the image reconstruction problem, we observe pronounced
minima of the quality metrics criticality and feasibility with respect to the
choice of $\ratioParam$. For a given choice of $\varepsilon_F$, the obtained 
minima for both quality metrics, criticality and feasibility residual,
are in close vicinity to each other.  The position of these minima is
also fairly consistent across the noisy and noiseless measurements of
both the criticality and the feasibility residuum.  Unfortunately,
setting $\ratioParam = \ratioParamOpt$ does not yield optimal results,
even though it appears as if $\ratioParamOpt$ is closer to being
optimal compared to the image reconstruction problem. Once
again, for an informed choice of $\ratioParam$, the stabilized
algorithm significantly outperforms the classical one, leading to
about an order of magnitude of reduction in terms of both criticality
and feasibility. The precise choice of the parameter and a systematic
means to determine it do, however, remain elusive.

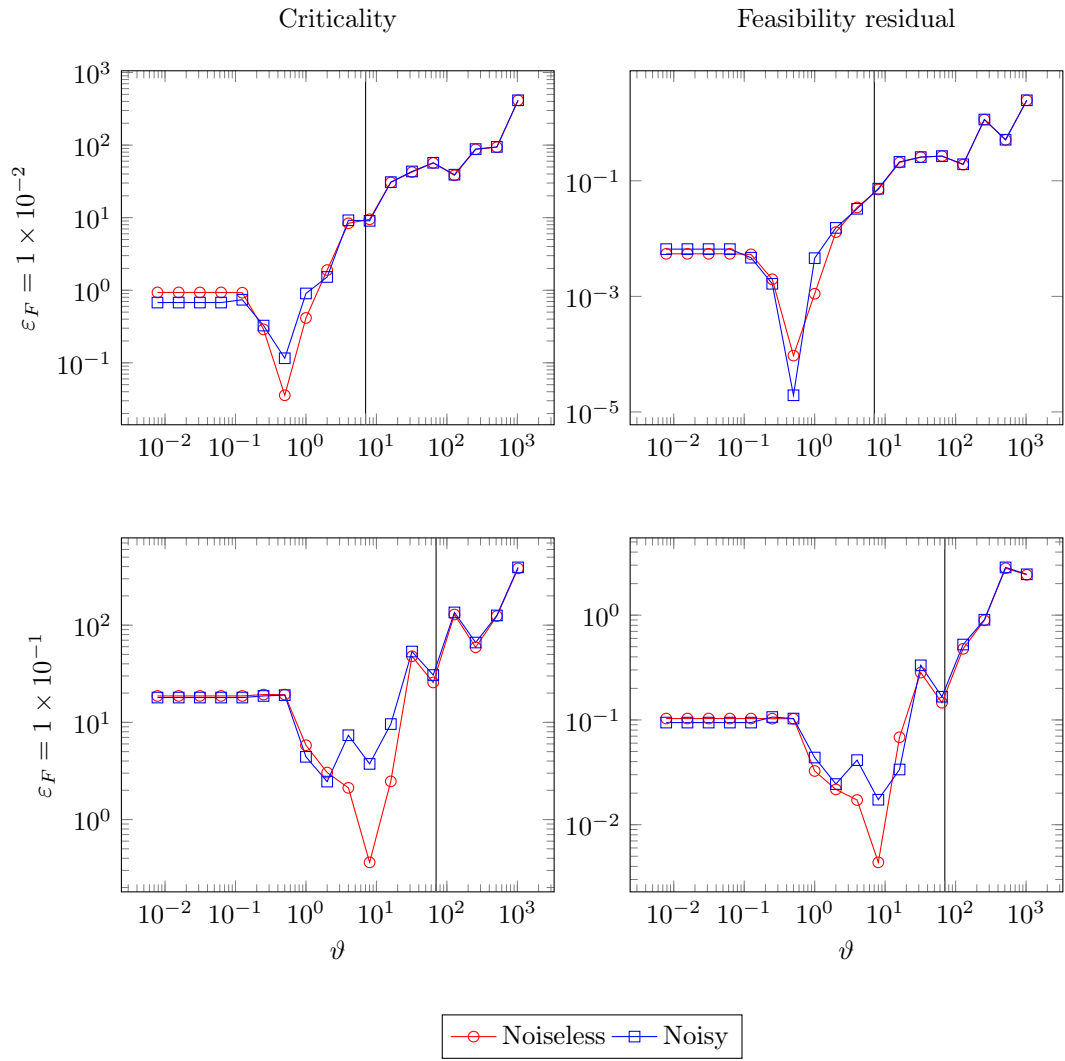
\begin{figure}[hb]
  \centering
  \begin{tikzpicture}
  \begin{groupplot}[
    group style={
      group size=2 by 2,
      group name=plots,
      xlabels at=edge bottom,
      vertical sep=1.5cm},
    width=.6\textwidth,
    xlabel={$\ratioParam$},
    ]

    \nextgroupplot[
    legend to name={HS71CommonLegend},
    legend columns=2,
    xmode=log,
    ymode=log,
    grid style={draw=black},
    ylabel={$\epsilon_{F} = \num{1e-2}$},
    extra x ticks={7.02},
    extra x tick style={xticklabel=\empty,grid=major}
    ]

    \addplot+[mark=o, color=NoiselessColor] table[col sep=semicolon,x=Stabilization,y=Crit] {Data/HS71/Results_0.01_100.0.csv};
    \addplot+[mark=square, color=NoisyColor] table[col sep=semicolon,x=Stabilization,y=NoisyCrit] {Data/HS71/Results_0.01_100.0.csv};

    \addlegendentry{Noiseless}
    \addlegendentry{Noisy}
    
    \nextgroupplot[
    xmode=log,
    ymode=log,
    grid style={draw=black},
    extra x ticks={7.02},
    extra x tick style={xticklabel=\empty,grid=major}
    ]

    \addplot+[mark=o, draw=NoiselessColor] table[col sep=semicolon,x=Stabilization,y=FeasRes] {Data/HS71/Results_0.01_100.0.csv};
    \addplot+[mark=square, draw=NoisyColor] table[col sep=semicolon,x=Stabilization,y=NoisyFeasRes] {Data/HS71/Results_0.01_100.0.csv};

    \nextgroupplot[
    xmode=log,
    ymode=log,
    grid style={draw=black},
    ylabel={$\epsilon_{F} = \num{1e-1}$},
    extra x ticks={70.27},
    extra x tick style={xticklabel=\empty,grid=major}
    ]

    \addplot+[mark=o, draw=NoiselessColor] table[col sep=semicolon,x=Stabilization,y=Crit] {Data/HS71/Results_0.1_100.0.csv};
    \addplot+[mark=square, draw=NoisyColor] table[col sep=semicolon,x=Stabilization,y=NoisyCrit] {Data/HS71/Results_0.1_100.0.csv};

    \nextgroupplot[
    xmode=log,
    ymode=log,
    grid style={draw=black},
    extra x ticks={70.27},
    extra x tick style={xticklabel=\empty,grid=major}
    ]

    \addplot+[mark=o,draw=NoiselessColor] table[col sep=semicolon,x=Stabilization,y=FeasRes] {Data/HS71/Results_0.1_100.0.csv};
    \addplot+[mark=square,draw=NoisyColor] table[col sep=semicolon,x=Stabilization,y=NoisyFeasRes] {Data/HS71/Results_0.1_100.0.csv};

  \end{groupplot}

  \node[align=center, yshift=.5cm, inner sep=0,text width=20em,anchor=south] at (plots c1r1.north) {Criticality};
  \node[align=center, yshift=.5cm, inner sep=0,text width=20em,anchor=south] at (plots c2r1.north) {Feasibility residual};

  \path (plots c1r2.south east) -- node[midway,yshift=-1.5cm,anchor=north]{\ref{HS71CommonLegend}} (plots c2r2.south west);
\end{tikzpicture}
  \caption{
    Noiseless (red) and noisy (blue) performance of the criticality
    and feasibility residual for different values of $\ratioParam$ as well
    as the value of $\ratioParamOpt$ (vertical line).
  }
  \label{figure:constrained_performance}
\end{figure}

\section{Conclusion}\label{sec:conclusion}

We have presented a noise-tolerant adaptation of a well-established
trust-region method for a non-smooth optimization problem
with a structured and convex non-smoothness described by a polyhedral
function, which is therefore suitable to handling by linear programming
techniques. The adaptation only requires 
knowledge of a Lipschitz constant and bounds on the noise in the 
objective function and its derivative. The analysis of the asymptotics
of the successive linear programming algorithm can be carried out
analogously to \cite{sleqp_convergence}, where the noiseless case
is handled. As we expect from the results in \cite{noisy_trust_region},
we do not to get convergence to a first-order stationary point but a
critical region instead.

In a noiseless setting, both the behavior of the 
algorithm and its convergence properties are 
consistent and similar to previous analyses. 
The computational results show that an informed 
choice of the stabilization parameter $\ratioParam$
may improve the quality of the obtained results significantly
so that we believe it makes sense to dedicate
research to improved bounds and efficient practical 
determination strategies.

Further analysis is also needed in order to be able to use and interpret
the method as a subproblem solver for constrained optimization 
with noisy constraint and objective evaluations. In particular, it is
necessary to study the asymptotics of the feasibility residual, identify
means to control it, and classify it with respect to existing concepts
from the field of uncertainty quantification like (distributional)
chance constraints or expectation constraints.

\appendix

\section{Proofs}

In the following we give the proofs of some of the result used
in~\cref{sec:convergence}. These proofs closely follow those
in~\cite{sleqp_convergence}. We provide them here to
make this article more self-contained.
\begin{proof}[Proof of \cref{lem:critical_normalization}]
  Let $d_1 \in \R^{n}$ be a minimizer of $\noisy{\Psi}_k(1)$ for $\Delta = 1$, \ie
  $\noisy{\Psi}_k(1) = \noisy{\phi}(x_k) - \noisy{\ell}_k(d_1)$.  If
  $\Delta \geq 1$, it follows that
  \begin{equation*}
    \noisy{\Psi}_k(\Delta) \geq \noisy{\phi}(x_k) - \noisy{\ell}_k(d_1) = \noisy{\Psi}_k(1).
  \end{equation*}
  It remains to prove the case $\Delta < 1$.
  From $\normLP{d_1} \leq 1$ it follows that
  $\normLP{\Delta d_1} \leq \Delta$, \ie $\Delta d_1$ is a feasible solution with respect to
  $\Delta$. Therefore, it holds that
  \begin{equation*}
      \noisy{\Psi}_k(\Delta) = \noisy{\phi}(x_k) - \min_{\normLP{d} \leq \Delta } \noisy{\ell}_k(d)
       \geq \Delta \left[ \noisy{\phi}(x_k) - \noisy{\ell}_k(d_1) \right]
       = \Delta \noisy{\Psi}_k(1),
  \end{equation*}
  where the inequality is due to the feasibility of $\Delta d_1$ and \cref{lem:linearized_convexity}.
\end{proof}

\begin{proof}[Proof of \cref{lem:lp_step_criticality}]
  Let $d_{1}$ be a minimizer for $\Delta = 1$. Assume (towards a
  contradiction) that
  $\normLP{d_{\Delta}} < \min(\Delta, \frac{\noisy{\Psi}_k(1)}{\LipEll}) \leq \frac{\noisy{\Psi}_k(1)}{\LipEll}$. It follows from
  \cref{lem:linear_zero_convexity} that
  \begin{gather}\label{eq:noisylddelta_estimate}
    \noisy{\ell}_k(d_{\Delta}) \geq \noisy{\ell}_k(0) - \LipEll \|d_{\Delta}\| > \noisy{\ell}_k(0) - \noisy{\Psi}_k(1) = \noisy{\ell}_k(d_{1}).
  \end{gather}
  If $\Delta \geq 1$, \eqref{eq:noisylddelta_estimate} cannot hold because $d_{1}$ is feasible with
  respect to $\Delta$ and therefore cannot yield a better objective
  with respect to $\noisy{\ell}_k$ than the minimizer $d_{\Delta}$. So it must hold that
  $\normLP{d_{\Delta}} \geq \frac{\noisy{\Psi}_k(1)}{\LipEll}$ in this case $\Delta \ge 1$.
  If, on the other hand, $\Delta < 1$, then $d_{1}$ may not be
  feasible. However, since $\noisy{\ell}_k$ is convex, it holds
  for all $\lambda \in (0, 1]$ that
  \begin{equation*}
    \noisy{\ell}_k(\lambda d_{1} + (1 - \lambda) d_{\Delta})
    \leq \lambda \noisy{\ell}_k(d_{1}) + (1 - \lambda) \noisy{\ell}_k(d_{\Delta})
    \underset{\eqref{eq:noisylddelta_estimate}}< \noisy{\ell}_k(d_{\Delta}).
  \end{equation*}
  Therefore, any point on the line segment $(d_{\Delta}, d_{1}]$ has a strictly
  lower value of $\noisy{\ell}_k$ than $d_{\Delta}$. Therefore,
  no such point can be feasible with respect to the constraint on $\normLP{\cdot}$.
  Consequently, $d_{\Delta}$ must lie on the boundary of the feasible set
  implying that $\normLP{d_{\Delta}} = \Delta$.
  The result is obtained by combining these bounds.
\end{proof}

\begin{proof}[Proof of \cref{lem:model_decrease}]
  The actual step must satisfy that
  $ \noisy{q}_k(d_k) \leq \noisy{q}_k(d^C_k)$, so the first inequality
  is a given. Similarly, the last inequality is an application of
  \cref{lem:critical_normalization}.  To show that the remaining
  inequality holds, recall that the line search for the Cauchy step
  $d^{C}_{k}$ terminates with an $\alpha_k$ such that
  \begin{equation*}
    \begin{aligned}
      \noisy{\phi}(x_k) - \noisy{q}_k(d^{C}_{k}) \geq \eta \left[ \noisy{\phi}(x_k) - \noisy{\ell}_k(d^{C}_{k}) \right]
      \ge \eta\alpha_k \left[ \noisy{\phi}(x_k) - \noisy{\ell}_k(d^{\LP}_{k}) \right]
      = \noisy{\Psi}_k(\trustRadius_k^{\LP})
    \end{aligned}
  \end{equation*}
  Since $d^{C}_{k} = \alpha_k d^{\LP}_{k}$ and $d^{\LP}_{k}$ achieves $\noisy{\Psi}_k(\trustRadiusLP_k)$,
  the inequality follows from \cref{lem:linearized_convexity}.
\end{proof}

\begin{proof}[Proof of \cref{lem:cauchy_step_size_bound}]
  The first inequality is due to the fact that the Cauchy step is the
  LP step scaled by $\alpha_k$, where the LP norm of the LP step is
  bounded by $\trustRadiusLP_k$. Consider two cases for the second inequality:

  \begin{enumerate}
  \item
    The decrease condition is immediately satisfied for
    the initial step size of $\alpha_k = \min(1, \trustRadius_k / \|d^{\LP}_k\|)$.
    Consequently it follows that
    \begin{equation*}
      \begin{aligned}
        \normLP{d^{C}_k} = \normLP{\alpha_k d^{\LP}_k}
        &= \min \left(1, \trustRadius_k / \|d^{\LP}_k\| \right) \normLP{d^{\LP}_k}
      \end{aligned}
    \end{equation*}
    We consider two cases:
    \begin{enumerate}
    \item
      $ \trustRadius_k / \|d^{\LP}_k\| \geq 1$, which is to say that $\normLP{d^{C}_k} = \normLP{d^{\LP}_k}$.
      It follows from \cref{lem:lp_step_criticality} that
      \begin{equation*}
        \normLP{d^{\LP}_k} \geq \min \left( \trustRadiusLP_k, \frac{\noisy{\Psi}_k(1)}{\LipEll} \right),
      \end{equation*}
      which implies the claimed bound.
    \item
      Otherwise we know that $\normLP{d^{C}_k} = \normLP{d^{\LP}_k} \trustRadius_k / \|d^{\LP}_k\|$.
      We can use~\eqref{eq:lp_norm_equiv} to obtain that $\|d^{\LP}_k\| \leq \gamma \normLP{d^{\LP}_k}$,
      inferring that
\[
          \normLP{d^{C}_k}
          = \normLP{d^{\LP}_k} \trustRadius_k / \|d^{\LP}_k\| 
           \geq \normLP{d^{\LP}_k} \trustRadius_k / (\gamma \normLP{d^{\LP}_k} ) 
= \trustRadius_k / \gamma,
\]
      which implies the claimed bound.
    \end{enumerate}
  \item
    The decrease condition is only satisfied at a later iteration of
    the line search. Recall that the line search computes step sizes
    by multiplying a base length with powers of an input parameter
    $\tau \in (0, 1)$. We can therefore deduce that the sufficient decrease
    condition was not satisfied for $\alpha_k / \tau$ in the previous iteration, \ie
    \begin{equation*}
      \noisy{\phi}(x_k) - \noisy{q}_k(\alpha_k / \tau d^{\LP}_k) <
      \eta \left[ \noisy{\phi}(x_k) - \noisy{\ell}_k(\alpha_k / \tau d^{\LP}_k) \right]
    \end{equation*}
    Since the only difference between the linearized and quadratic model is the quadratic term, we have that
    \begin{equation*}
      \onehalf (\alpha_k / \tau)^2 \langle d^{\LP}_k, B_k d^{\LP}_k \rangle \geq
      (1 - \eta) \left[ \noisy{\phi}(x_k) - \noisy{\ell}_k(\alpha d^{\LP}_k / \tau) \right].
    \end{equation*}
    The left hand side can be bounded above by using \cref{ass:hess_bounded} and
    relation~\eqref{eq:lp_norm_equiv} to yield
    \begin{equation*}
      \begin{aligned}
        \:\onehalf (\alpha_k / \tau)^2 \langle d^{\LP}_k, B_k d^{\LP}_k \rangle
        \leq& \: \onehalf (\alpha_k / \tau)^2 \beta \gamma^2 \normLP{d^{\LP}_k}^2 \\
        \leq& \: \onehalf (\alpha_k / \tau)^2 \beta \gamma^2 \normLP{d^{\LP}_k} \trustRadiusLP_k. \\
      \end{aligned}
    \end{equation*}
    Similarly, for the right hand side we can use
    \cref{lem:linearized_convexity,lem:critical_normalization}
    to obtain
    \begin{equation*}
      \begin{aligned}
        \left[ \noisy{\phi}(x_k) - \noisy{\ell}_k(\alpha / \tau d^{\LP}_k ) \right]
        \geq & \alpha_k / \tau \left[ \noisy{\phi}(x_k) - \noisy{\ell}_k(d^{\LP}_k) \right] \\
        \geq & \alpha_k / \tau \min(1, \trustRadiusLP_k) \noisy{\Psi}_k(1),
      \end{aligned}
    \end{equation*}
    Putting these inequalities together yields the bound
    \begin{equation*}
      \normLP{d^{C}_k} = \alpha_k \normLP{d^{\LP}_k} \geq \frac{2 (1 - \eta) \tau}{ \beta \gamma^{2}}
      \min \left( 1, \frac{1}{\trustRadiusLP_k} \right) \noisy{\Psi}_k(1)
    \end{equation*}
    required to complete the proof.\qedhere
  \end{enumerate}
\end{proof}
\begin{proof}[Proof of \cref{lem:finite_successful_convergence}]
  Let $k_0$ be the index of the last accepted step. Then,
  $x_{k + 1} = x_{k} \enifed x^{*}$ for all $k > k_0$. Consequently,
  after finishing the $k_0$-the iteration, $\noisy{\Psi}_k(1)$ stays
  at a constant value of $\delta \geq 0$.  What is more, following
  iteration $k_0$ we have that
  $\trustRadius_{k + 1} < \kappa_{u} \Delta_{k}$, where
  $\kappa_{u} < 1$. Therefore, $\trustRadius_k$ tends to zero.  Recall
  from \cref{lem:tr_down} that if $\delta > 0$, then $\trustRadius_k$
  is bounded away from zero. Therefore, since $\trustRadius_k$ tends
  to zero, it must hold that $\delta = 0$.
\end{proof}

\section{Estimations}
\label{sec:estimations}

\paragraph{Lipschitz constant of the $\ell^1$-penalty function}

In the following, we give an estimation for the Lipschitz constant
$\LipOmega$ of the penalty function
$\omega : \R \times \R^{m} \to \R$,
$\omega(x, y) = x + \nu \|y\|_{1}$, based on the constant $\nu > 0$
and the dimension $m \in \N$. Since we use this function in
all of the examples in~\Cref{sec:numerics}, and since the value of
$\ratioParamOpt$ depends on the value of $\LipOmega$, we make its
derivation explicit. To obtain an optimal
value of $\LipOmega$, we solve the optimization problem
\begin{equation*}
  \begin{aligned}
    \max_{x, y, x', y'} \quad & |\omega(x', y') - \omega(x, y)| \\
    \st \quad & (x - x')^{2} + \|y - y'\|^{2} \leq 1,
  \end{aligned}
\end{equation*}
\ie we maximize the difference in values of $\omega$ while controlling
the distance between the points $(x, y)$ and $(x', y')$. Observe
that
\begin{equation*}
  |\omega(x', y') - \omega(x, y)| = |(x - x') + \nu \|y - y'\|_{1}|,
\end{equation*}
from which it follows that both the objective and the constraint value
only depend on $x - x'$ and $y - y'$. We can therefore simplify the problem
by setting $y' = 0$ and $y'= 0$:
\begin{equation*}
  \begin{aligned}
    \max_{x, y} \quad & |x + \nu \|y\|_{1}| \\
    \st \quad & x^{2} + \|y\|^{2} \leq 1.
  \end{aligned}
\end{equation*}
We can simplify the problem further by realizing that we can assume
both $x$ and $y$ to be non-negative, eliminating the absolute value in
the objective. The largest ratio of $\nu \|y\|_{1}$ over $\|y\|^{2}$
is achieved by setting all entries of $y$ to the same value
$y_0 \in \R$, yielding the problem
\begin{equation*}
  \begin{aligned}
    \max_{x, y_0} \quad & x + \nu m y_0 \\
    \st \quad & x^{2} + m y_0^{2} \leq 1 \\
    & x, y_0 \geq 0.
  \end{aligned}
\end{equation*}
By setting $z \define \sqrt{m} y_0$, we obtain the problem
\begin{equation*}
  \begin{aligned}
    \max_{x, y_0} \quad & x + \nu \sqrt{m} z \\
    \st \quad & x^{2} + z^{2} \leq 1 \\
                        & x, z \geq 0.
  \end{aligned}
\end{equation*}
The optimal solution of this problem is attained at
\begin{equation*}
  \begin{pmatrix}
    x^{\ast} \\
    z^{\ast}
  \end{pmatrix} = \frac{1}{\sqrt{1 + \nu^{2}m}}
  \begin{pmatrix}
    1 \\
    \nu \sqrt{m}
  \end{pmatrix},
\end{equation*}
yielding the objective $\sqrt{1 + \nu^{2}m} \enifed \LipOmega$.

\paragraph{Image Reconstruction}
In the following, we provide estimations regarding the noise levels
associated with the image fidelity map introduced in
\Cref{sec:image_denoising}. Recall that the squared Frobenius norm
of a matrix $A \in \R^{M \times N}$ is given by
$\normFrob{A}^2 \define \sum_{i = 1}^{M} \sum_{j = 1}^{N} a_{ij}^{2}$.
Thus, if $|a_{ij}| \leq \epsilon$ for a given $\epsilon > 0$,
it follows that
\begin{equation*}
  \normFrob{A}^2 \leq \sum_{i = 1}^{M} \sum_{j = 1}^{N} \epsilon^{2} = \epsilon^{2} M N,
\end{equation*}
and therefore, that $\normFrob{A} \leq \epsilon \sqrt{MN}$. The noisy
fidelity function $\noisy{F}(X)$ satisfies the identities
\begin{equation*}
  \begin{aligned}
    \noisy{F}(X) &= \frac{1}{2} \normFrob{X - \noisy{Y}}^{2} = \frac{1}{2} \normFrob{X - (Y + \delta_{Y}(X))}^{2} \\
&= \frac{1}{2} \normFrob{X - Y}^{2} + \langle X - Y, \delta_{Y}(X)\rangle_F + \frac{1}{2} \normFrob{\delta_{Y}(X)}^{2},
  \end{aligned}
\end{equation*}
where $\langle\cdot,\cdot\rangle_F$
denotes the inner product that
induces the Frobenius norm.
Since the entries of $X$ and $Y$ are in $[0, 1]$ , and therefore have
absolute values bounded by \num{1}, it follows that
$\normFrob{X - Y} \leq \sqrt{M N}$.
The choice of distribution implies that
the values in $\delta_{Y}(X)$ are bounded by $ \pm\imageNoise$, and
therefore that $\normFrob{\delta_{Y}(X)} \leq \imageNoise \sqrt{M N}$, from which it follows that
\begin{equation*}
  |\noisy{F}(X) - F(X)|
  \leq \normFrob{X - Y} \normFrob{\delta_{Y}(X)} + \frac{1}{2} \normFrob{\delta_{Y}(X)}^{2}
  \leq \left(\imageNoise + \frac{1}{2} \imageNoise^{2} \right) MN,
\end{equation*}
by means of the Cauchy--Schwarz
inequality for $\langle\cdot,\cdot\rangle_F$. That is the noise level $\imageNoise$ yields a
corresponding value for $\epsilon_F$. Similarly, it holds that
$\noisy{F}'(X) = F'(X) - \noisy{Y}$, and therefore
$\normFrob{\noisy{F}'(X) - F'(X)} \leq \normFrob{\delta_{Y}(X)} \leq \imageNoise \sqrt{M N}$,
corresponding to a value for $\epsilon_F'$.

\emergencystretch 2em
\printbibliography

\end{document}